\documentclass[12pt]{article}
\usepackage{amsmath,amsthm,amscd,amssymb}
\usepackage[mathscr]{eucal}
\usepackage{amssymb}
\usepackage{latexsym}
\parindent0cm
\textwidth=150mm
\oddsidemargin=5mm
\evensidemargin=5mm
\topmargin=0mm \textheight=220mm
\theoremstyle{definition}
\newtheorem{Def}{Definition}[section]
\newtheorem{Thm}[Def]{Theorem}
\newtheorem{Prop}[Def]{Proposition}
\newtheorem{Rem}[Def]{Remark}
\newtheorem{Ex}[Def]{Example}
\newtheorem{Cor}[Def]{Corollary}

\newtheorem{Lem}[Def]{Lemma}

\numberwithin{equation}{section}

\begin{document}

\title{On mod $p$ singular modular forms}

\author{Siegfried B\"ocherer and Toshiyuki Kikuta}
\maketitle

\noindent 
{\bf 2010 Mathematics subject classification}: Primary 11F33 $\cdot$ Secondary 11F46\\
\noindent
{\bf Key words}: Siegel modular forms, Congruences for modular forms, Singular




\begin{abstract}
We show that an elliptic modular form with integral Fourier 
coefficients in a number field $K$, for which 
all but finitely many coefficients are divisible by a prime ideal $\frak{p}$ of $K$, 
is a constant modulo $\frak{p}$.
A similar property also holds for Siegel modular forms.
 Moreover, we define the notion of mod $\frak{p}$ 
singular modular forms and discuss some relations between their weights and the 
corresponding prime $p$. We discuss some examples of mod $\frak{p}$ singular 
modular forms arising from Eisenstein series and from theta series attached to lattices
with automorphisms. Finally, we apply our results to properties mod $\frak{p}$
of Klingen-Eisenstein series.     
\end{abstract}
 
\maketitle

\section{Introduction}
\label{intro}
Let $R$ be a commutative ring. An elliptic modular form $f=\sum_{n=0}^\infty a_f(n)q^n$ with all 
coefficients $a_f(n)$ in $R$ can be regard as an element of $R[\![q]\!]$. For example, we may consider
the normalized Eisenstein series $E_k$ of weight $k\ge 4$ for $SL_2(\mathbb{Z})$  
\begin{align*}
E_k=1-\frac{2k}{B_k}\sum _{n=0}^\infty \sum _{0<d|n}d^{k-1}q^n\in \mathbb{Q}[\![q]\!],   
\end{align*}  
where $B_k$ is the $k$-th Bernoulli number. In particular, if the weight is $p-1$ for a prime $p$ with $p\ge 3$, then $E_{p-1}\in \mathbb{Z}_{(p)}[\![q]\!]$ and  
\begin{align*}
E_{p-1}\equiv 1 \bmod{p},  
\end{align*}
in other words $\widetilde{E_{p-1}}=1\in \mathbb{F}_p^\times $. 
This is an important fact for the study of Galois representations 
attached to 
modular forms and $p$-adic modular forms 
(e.g. Swinnerton-Dyer \cite{Swn}, Serre \cite{Se}, etc.). 
It is  natural to ask  the question (Q) below: 

(Q)\ \textit{Does there exist a modular form $f$ such 
that $\widetilde{f}$ is  a non-constant 
 polynomial in $\mathbb{F}_p[q]$? }

It seems that the existence of such forms was generally doubted, 
however the question  was not settled in previous studies (as far as we know). 
One of our purposes is to answer  
this question for the case of Siegel modular forms of general 
degree including the 
case of half integral weight (see Theorem \ref{ThmM2}); we will also 
prove a similar
result for Jacobi forms. 

Moreover, we discuss the weights of ``mod ${p}$ 
singular Siegel modular forms'' (defined in Section \ref{Main}). 
We recall (see \cite{Frei}) 
that 
a Siegel modular form of degree $n$ called {\it singular} 
(in the usual sense) if all its rank $n$ Fourier coefficients vanish.
A main result on this topic is that modular forms of degree $n$ and weight $k$
are singular iff their weight is smaller than  
$\frac{n}{2}$. Our second purpose is to 
describe the possible weights  of  mod ${p}$ singular modular forms 
in a similar way. Actually, in  Theorem \ref{ThmM3} we prove a more 
subtle property: Suppose that a degree $n$ modular form of weight $k$ 
has coefficients in ${\mathbb Z}$ and there exists a rank $r$ Fourier
coefficient not divisible by $p$. Then the same property also holds for degree
$r+1$ unless $2k-r$ is divisible by $p-1$.  

We construct some examples of mod ${p}$ singular modular 
forms in two ways (see Section \ref{Ex}). 
Moreover as an application of Theorem \ref{ThmM3} on mod ${p}$ modular forms 
we describe 
some properties of Fourier coefficients of 
Klingen-Eisenstein series (see Theorem \ref{Kli}).  \\  
In this introduction, we used rational primes $p$ for congruences.
In the main text, we will deal with congruences modulo ${\mathfrak p}^m$, where
$\mathfrak p$ is a prime ideal in an arbitrary number field. This is possible
mainly due to the refinements of the theory of modular forms mod $p$
due to Rasmussen \cite{Ra}. Our focus will be on the case of odd primes $p$,
but we should mention, that most of our results are - after some modification -
also valid for $p=2$.

{\bf Acknowledgment:} This work was done when the first 
author held a guest professorship
at the Graduate School of Mathematical Sciences at the University of 
Tokyo. He wishes to
thank Professor T. Oda for arranging this stay and 
supporting this collaboration. We also thank Professor S. Nagaoka
for helpful discussions.

\section{Preliminaries}
\subsection{Siegel modular forms}
\label{Sie}
Let $N$ be a natural number. In this paper, we deal with three types congruence subgroups of Siegel modular group $\Gamma _n=Sp_n(\mathbb{Z})$ as follows: 
\begin{align*}
\Gamma ^{(n)}(N)&:=\left\{ \begin{pmatrix}A & B \\ C & D \end{pmatrix}\in \Gamma _n \: \Big{|} \: B\equiv C \equiv 0_n \bmod{N},\ A\equiv D\equiv 1_n \bmod{N} \right\},\\
\Gamma _1^{(n)}(N)&:=\left\{ \begin{pmatrix}A & B \\ C & D \end{pmatrix}
\in \Gamma _n \: \Big| \: C\equiv 0_n \bmod{N},\ \det (A) \equiv \det (D) \equiv 1 \bmod{N} \right\},\\
\Gamma _0^{(n)}(N)&:=\left\{ \begin{pmatrix}A & B \\ C & D \end{pmatrix}\in \Gamma _n \: \Big| \: C\equiv 0_n \bmod{N} \right\}.  
\end{align*} 
Let $\Gamma $ be the one of above modular groups of degree $n$ with level $N$. For a natural number $k$ and a Dirichlet character $\chi : (\mathbb{Z}/N\mathbb{Z})^\times \rightarrow \mathbb{C}^\times $, the space $M_k(\Gamma ,\chi )$ of Siegel modular forms of weight $k$ with character $\chi$ consists of all of holomorphic functions $f:\mathbb{H}_n\rightarrow \mathbb{C}$ satisfying  
\begin{equation}
f(MZ)=\chi (\det D)\det (CZ+D)^kf(Z),\quad for\ M=\begin{pmatrix}A & B \\ C & D \end{pmatrix}\in \Gamma , 
\label{modular}\end{equation}
where $\mathbb{H}_n$ is the Siegel upper-half space of degree $n$. If $n=1$, the usual condition in the cusps should be added.

If $k=l/2$ is half-integral, then we assume that the level $N$ of $\Gamma $ satisfies $4|N$. 
We define $M_k(\Gamma ,\chi )$ as the space of all of holomorphic functions $f:\mathbb{H}_n\rightarrow \mathbb{C}$ such that  
\begin{align*}
f(MZ)=\chi (\det D) j(M,Z)^l f(Z),\quad for\ M=\begin{pmatrix}A & B \\ C & D \end{pmatrix}\in \Gamma , 
\end{align*} 
where $j(M,Z)$ defined by the transformation law of the theta series 
\[\theta ^{(n)}(Z):=\sum _{X\in \mathbb{Z}^n}e^{2\pi i {}^tX Z X}, 
\]
namely \[j(M,Z):=\frac{\theta ^{(n)}(MZ)}{\theta ^{(n)}(Z)},\quad for \ M\in \Gamma _0^{(n)}(4). \]
For more details  on Siegel modular forms of half-integral weight, see \cite{And-Zu}. 

In both cases, when $\chi $ is a trivial character, we write simply as $M_k(\Gamma )$ for $M_k(\Gamma ,\chi )$. 
  
Any $f \in M_k(\Gamma, \chi )$ has a Fourier expansion of the form
\[
f(Z)=\sum_{0\leq T\in\Lambda_n}a_f(T)e^{2\pi i \frac{1}{N}{\rm tr}(TZ)},
\quad Z\in\mathbb{H}_n,
\]
where
\[
\Lambda_n
:=\{ T=(t_{ij})\in Sym_n(\mathbb{Q})\;|\; t_{ii},\;2t_{ij}\in\mathbb{Z}\; \}
\]
(the lattice in $Sym_n(\mathbb{R})$ of half-integral, symmetric matrices). In particular, if $\Gamma $ is among  $\Gamma _n$, $\Gamma _0^{(n)}(N)$ and $\Gamma _1^{(n)}(N)$, then the 
Fourier expansion of $f$ is given by the form 
\[
f(Z)=\sum_{0\leq T\in\Lambda_n}a_f(T)e^{2\pi i {\rm tr}(TZ)}.
\]

We denote by $\Lambda _n^+$ the set of all positive definite elements 
of $\Lambda _n$. We call two elements $S,T$ of $\Lambda_n$ equivalent, if
there exits $U\in GL(n,{\mathbb Z})$ such that $S[U]=T$, 
where $S[U]:= {}^tUSU$.\\ 
For a subring $R$ of $\mathbb{C}$, let $M_{k}(\Gamma ,\chi )_{R}
\subset M_{k}(\Gamma, \chi )$ 
denote the space of all modular forms whose Fourier coefficients are in $R$.

\subsection{Jacobi forms and their theta expansion}
\label{Jacobi}

In this subsection we briefly recall some properties of Jacobi forms and their
theta expansion. For details we refer
to \cite{Zie}, generalizing the one-dimensional case from \cite{Ei-Za}.

For a congruence subgroup $\Gamma $ of $Sp_n({\mathbb Z})$ and an element
$S\in \Lambda^+_r$ we define degree $n$ Jacobi forms of index $S$ and 
weight $k$ for $\Gamma$ as holomorphic functions
$\varphi:{\mathbb H}_n\times {\mathbb C}^{(n,r)}\rightarrow \mathbb{C}$ such that for 
$Z=\left( \begin{smallmatrix} \tau &{\mathfrak z}\\ {}^t {\mathfrak z} & 
\tau'\end{smallmatrix} \right)$, $\tau \in {\mathbb H}_n,\tau' \in {\mathbb H}_r$
$$F(Z):= \varphi(\tau, {\mathfrak z}) e^{2\pi i {\rm tr} (S\cdot \tau')}$$
satisfies the usual transformation law (\ref{modular}) 
of modular forms of degree $n+r$ for the group $\Gamma_{n,r}^J$ defined by
\begin{equation}\left\{ \begin{pmatrix} a_1 & 0 &{}& b_1 & b_2\\
a_3 & 1_r &{}&  b_3 & b_4\\[0.2cm]
c_1 & 0 &{}& d_1 & d_2\\
0 & 0 & {}& 0 & 1_r\end{pmatrix}\in Sp_{n+r}({\mathbb Z})\, \Big{|} 
\begin{pmatrix} a_1 & b_1\\ c_1 & d_1\end{pmatrix}
\in 
\Gamma \right\}\label{jacobiproblem}
\end{equation} 
with the usual additional condition in the cusps when $n=1$. 
We denote the space of such functions by $M_{k,S}(\Gamma^J_{n,r})$.

The Fourier expansion of such Jacobi forms can be written as
$$\varphi(\tau,{\mathfrak z})=\sum_{(T,R)\in \Lambda_n\times {\mathbb Z}^{(n,r)}}
c(T,R)e^{2\pi i {\rm tr}(T\tau +R {}^t {\mathfrak z})}.$$
Then the Fourier coefficients satisfy
$$c(T,R)=\det(a_1)^k c(L,M),$$ {if}
\begin{equation}
\left(\begin{array}{cc} T & \frac{R}{2}\\
\frac{{}^t R}{2} & S\end{array}\right) \left[ \left(\begin{array}{cc} a_1 & 0\\
a_3 & 1_r\end{array}\right) \right] =
\left(\begin{array}{cc} L & \frac{M}{2}\\
\frac{{}^tM}{2} & S\end{array}\right)
\label{Jequiv}
\end{equation}
with
$\left( \begin{smallmatrix} 
a_1 & 0 &{}& 0 & 0\\
a_3 & 1_r &{} & 0 & 0\\
0 & 0 & {} & {}^ta_1^{-1} & d_2 \\
0 & 0 &{} & 0 & 1_r \end{smallmatrix} \right)\in \Gamma^J_{n,r}$.
We then call $(T,R)$ and $(L,M)$ J-equivalent. \\
Jacobi forms have a ``theta expansion''
$$\varphi(\tau,{\mathfrak z})=\sum_{\mu} h_{\mu}(\tau)\Theta_S[\mu]
(\tau,{\mathfrak z}),$$
where $\mu$ runs over ${\mathbb Z}^{(n,r)}\cdot (2S)\backslash 
{\mathbb Z}^{(n,r)}$
and 
$$\Theta_S[\mu](\tau,{\mathfrak z})=\sum_{\lambda \in {\mathbb Z}^{(n,r)}} 
e^{2\pi i {\rm tr}( S[{}^t \lambda +(2S)^{-1}{}^t \mu] \tau
+ 2 (\lambda+\mu (2S)^{-1}) {}^t{\mathfrak z}}. $$

The $h_{\mu}$ are then modular forms of weight $k-\frac{r}{2}$ 
(possibly half-integral) for the congruence subgroup $\Gamma\cap \Gamma^{(n)}(4L)$, where $L$ is the level
of the index $S$. Moreover, the Fourier expansion of $h_{\mu}$ is explicitly described by

$$h_{\mu}(\tau)= e^{-2\pi i {\rm tr}(\frac{1}{4}S^{-1}[{}^t \mu ]\tau)}
\sum_{T\in \Lambda_n} c(T,\mu) e^{2\pi i {\rm tr}(T \cdot \tau)}$$

Jacobi forms arise naturally as Fourier-Jacobi 
coefficients of Siegel modular forms and play an important role for us, see 
subsection \ref{subsec4.2}.

For a subring $R$ of $\mathbb{C}$, let $M_{k,S}(\Gamma^J_{n,r})_R
\subset M_{k,S}(\Gamma^J_{n,r})$ denote the space of all Jacobi forms whose Fourier coefficients are in $R$.

\section{Main results}
\label{Main} \label{sing}
Let $K$ be a number field with ring of integers ${\mathcal O}=
{\mathcal O}_K$. For a prime ideal $\frak{p}$ in 
${\mathcal O}$ with ${\mathfrak p} | p$, 
let ${\mathcal O}_{\frak{p}}$ be the 
localization of ${\mathcal O}$ at $\frak{p}$.
To formulate higher congruences for weights 
(following \cite{Ra}), we need the following notation:  
We denote by $e$ the ramification index $e=e({\mathfrak p}/p)$; for the 
Galois closure $L$ of $K$ let ${\mathfrak P}$ be a prime ideal of 
${\mathcal O_L}$ dividing ${\mathfrak p}$; we put $r:= v_p(\tilde{e})$,
where $\tilde{e}=e({\mathfrak P}/p)$. 
For a positive integer $m$ we define a nonnegative integer $\beta(m)$ 
by
$$\beta(m):=  \max \left \{ \Big \lceil \frac{m}{e} \Big \rceil -r-1, 0\right \}$$
We note that for the case $K=\mathbb Q$ we have $\beta(m)=m-1$.
     
First, we shall state our results on the question (Q) in the Introduction: 
The first our main result is 
\begin{Thm} 
\label{ThmM2}
Let $\frak{p}$ be an any prime ideal, 
$0< k \in \mathbb{Z}$ or $\frac{1}{2}+\mathbb{Z}$ and 
$f \in M_k(\Gamma ^{(n)}(N))_{{\mathcal O}_{\frak{p}}}$, 
(where $4|N$ unless $k\in \mathbb{Z}$). If there are at 
most only finitely many inequivalent $S_1, \dots , 
S_h\in \Lambda _n$ such that $a_f(S_i)\not \equiv 0$ mod $\frak{p}^m$, then $f\equiv c$ mod 
$\frak{p}^m$ for some $c\in {\mathcal O}_\frak{p}$. 
Moreover, if $k$ is integral, $c$ a unit of ${\mathcal O}_{\frak{p}}$ and $N$ coprime to $p\not=2$, 
then $(p-1)p^{\beta(m)}$ divides
the weight $k$. 
\end{Thm}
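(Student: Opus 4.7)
The theorem splits into two assertions: (A) that $f$ reduces to a constant $c$ modulo $\mathfrak{p}^m$, and (B) that, under the additional unit, integrality and coprimality hypotheses, $(p-1)p^{\beta(m)}$ divides $k$. My strategy is to prove (A) by induction on the degree $n$, with the elliptic case $n=1$ as the essential base, and then to derive (B) from (A) by descending to an elliptic modular form via the Siegel $\Phi$-operator and invoking Rasmussen's refinement of the Serre--Swinnerton-Dyer theorem on weight filtrations modulo $\mathfrak{p}^m$.

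For the inductive step of (A) from degree $n\ge 2$ to degree $n-1$, I would expand $f$ via its Fourier--Jacobi series $f(Z)=\sum_{s\ge 0}\phi_s(\tau,z)e^{2\pi is\tau'}$ (with $r=1$), as recalled in Section~\ref{Jacobi}. The hypothesis of finitely many inequivalent nonzero Fourier coefficients of $f$ modulo $\mathfrak{p}^m$ transfers, via the identification of the Fourier coefficients of $f$ with those of the $\phi_s$, to an analogous finiteness statement for each Jacobi coefficient. The index-zero term $\phi_0=\Phi f$ is a degree-$(n-1)$ Siegel modular form of weight $k$ to which induction applies directly. For each $s>0$, the theta decomposition $\phi_s=\sum_\mu h_\mu\,\Theta_s[\mu]$ produces half-integral weight Siegel modular forms $h_\mu$ of degree $n-1$ and weight $k-\tfrac{1}{2}$, whose Fourier coefficients (up to a quadratic phase shift) are a subset of those of $\phi_s$ and so again satisfy the finiteness hypothesis. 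Since a nonzero constant cannot be a modular form of nonzero half-integral weight, the induction hypothesis forces $h_\mu\equiv 0\pmod{\mathfrak{p}^m}$, hence $\phi_s\equiv 0$ for $s>0$; combining this with the induction applied to $\Phi f$ collapses $f$ to a constant modulo $\mathfrak{p}^m$.

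The main obstacle is the elliptic base case $n=1$: every level-$N$ elliptic modular form whose mod-$\mathfrak{p}^m$ reduction has finite $q$-support must reduce to a constant. The approach I would take is to apply the Hecke operator $U_\ell$ for a prime $\ell$ coprime to $Np$ and exceeding the $q$-degree of $\tilde f$, which annihilates every term of positive $q$-exponent of a polynomial, leaving only the constant term; combining the fact that $U_\ell$ preserves mod-$\mathfrak{p}^m$ modularity at an enlarged level with the structural results of Rasmussen~\cite{Ra} on the filtration of mod-$\mathfrak{p}^m$ modular forms, and iterating with suitable choices of $\ell$, forces the non-constant Fourier coefficients of $\tilde f$ themselves to vanish. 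Care is required to adapt the argument to the half-integral weight case and to control the level-change under $U_\ell$.

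For (B), applying the iterated Siegel $\Phi$-operator $n-1$ times produces an elliptic modular form $\Phi^{n-1}f$ of weight $k$ with $\Phi^{n-1}f\equiv c\pmod{\mathfrak{p}^m}$, $c$ still a unit of $\mathcal{O}_{\mathfrak{p}}$. Dividing through by $c$ reduces to an elliptic modular form of weight $k$ congruent to $1$ modulo $\mathfrak{p}^m$. Rasmussen's refinement~\cite{Ra} of the Swinnerton-Dyer--Serre theorem on mod-$\mathfrak{p}^m$ weight filtrations (the analogue for prime powers of $\widetilde{E_{p-1}}\equiv 1\pmod{p}$) then asserts that such a congruence forces $(p-1)p^{\beta(m)}\mid k$, which is the desired divisibility. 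The hypotheses $N$ coprime to $p$ and $p\ne 2$ are precisely what is needed for Rasmussen's theorem to apply cleanly in this form.
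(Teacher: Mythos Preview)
Your inductive step contains a genuine error. After applying the induction hypothesis to each $h_\mu$ (for $s>0$) you obtain $h_\mu\equiv c_\mu\pmod{\mathfrak{p}^m}$ for some $c_\mu\in\mathcal{O}_{\mathfrak{p}}$, and you then assert $c_\mu=0$ because ``a nonzero constant cannot be a modular form of nonzero half-integral weight.'' This confuses \emph{being} a constant with being \emph{congruent} to one modulo $\mathfrak{p}^m$: the form $h_\mu$ is a genuine modular form of weight $k-\tfrac12$, and nothing prevents its constant Fourier coefficient from being a unit (already the classical theta constant of weight $\tfrac12$ has constant term $1$). That constant term is a rank-one Fourier coefficient of $f$, which your argument has not controlled. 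So your induction, as written, only pushes the mod-$\mathfrak{p}^m$ support of $f$ down to matrices of rank $\le 1$, not to rank $0$. Your base case $n=1$ is also incomplete: applying $U_\ell$ with $\ell>d$ gives $U_\ell f\equiv a_f(0)$, but that is a statement about $U_\ell f$, not about $f$, and the passage back to $f$ via ``iterating with suitable $\ell$'' and Rasmussen's filtration is left entirely unspecified.

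The paper avoids both difficulties by running the Fourier--Jacobi expansion in the \emph{opposite} direction, taking $\tau\in\mathbb{H}_1$ and $\tau'\in\mathbb{H}_{n-1}$, so that the $h_\mu$ are \emph{elliptic} modular forms and one appeals directly to the degree-one result (Proposition~\ref{Prop}) rather than to a full inductive hypothesis. That degree-one result is proved by a concrete pigeonhole argument with the Hecke operators $T_\ell$ (resp.\ $T_{\ell^2}$): one finds primes $\ell_1>\ell_2>d$, $\ell_i\equiv 1\pmod{pN}$, with $f|T_{\ell_1^2}\equiv f|T_{\ell_2^2}$ up to the Sturm bound, and comparing the $\ell_2^2 d$-th coefficients forces $a_f(d)\equiv 0$. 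Crucially, the paper first proves only that the \emph{positive-definite} Fourier coefficients vanish (Proposition~\ref{Prop4}) --- so the nonzero constants $c_\mu$ are harmless there --- and then handles lower ranks separately by applying $\Phi^{n-r}$ and reusing Proposition~\ref{Prop4} in degree $r$. Your treatment of part~(B) via $\Phi^{n-1}$ and Rasmussen's theorem is correct and matches the paper.
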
 

\begin{Rem}
(1) It is quite natural to consider equivalence classes here because of the
invariance property 
$a_f(T)=a_f(T[U])$, valid  for all $T\in \Lambda _n$ and for infinitely many 
$U\in GL_n(\mathbb{Z})$.  To be more precise, for the group $\Gamma ^{(n)}(N)$ we 
should use a modified equivalence relation (allowing only such 
$U$, which are congruent to the unit matrix modulo $N$), but for the finiteness
condition in Theorem \ref{ThmM2} this does not really matter. 
\\
(2) The case of $n=1$ in Theorem \ref{ThmM2} 
answers the question (Q) in the Introduction. 
\end{Rem} 

We may ask a similar question for Jacobi forms:

\begin{Thm}
\label{ThmJ}
For $\Gamma:=\Gamma^{(n)}_1(N)$ and $S\in \Lambda_n^+$ let 
$$\varphi(\tau,{\mathfrak z})=
\sum_{R,T}c(T,R) e^{2\pi i tr(R\tau+T{\mathfrak z}^t)}=
\sum_{\mu}h_{\mu}(\tau)\Theta_S[\mu](\tau,{\mathfrak z})$$
be a Jacobi form in $M_{k,S}(\Gamma_{n,r}^J)_{{\mathcal O}_\frak{p}}$ such that
$c(T,R)\equiv 0$ mod ${\mathfrak p}^m$ for all but 
finitely many J-equivalence classes $(T,R)$. Then
$$\varphi(\tau,{\mathfrak z})\equiv \sum_{\mu} a(\mu)\Theta_S[\mu](\tau,{\mathfrak z}) \bmod {\mathfrak p}^m$$ 
where the $a(\mu)$ are elements of ${\mathcal O}_{\mathfrak p}$. 
The sum extends only over those $\mu$
with $\frac{1}{4}S^{-1}[\mu]$ half-integral; 
the $a(\mu)$ are elements of ${\mathcal O}_{\mathfrak p}$, namely
$a(\mu)=c(\frac{1}{4}S^{-1}[\mu],\mu)$. 
Moreover, if $N$ and the level of S are both coprime to $p$ with $p\not=2$, 
and $2k-r$
is not divisible by $(p-1)p^{\beta(m)}$ then all Fourier 
coefficients $c(T,R)$ are all divisible by $\frak{p}^m$.
\end{Thm}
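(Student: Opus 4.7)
The plan is to reduce Theorem \ref{ThmJ} to Theorem \ref{ThmM2} componentwise in the theta expansion. Writing
$$\varphi(\tau,\mathfrak{z})=\sum_\mu h_\mu(\tau)\,\Theta_S[\mu](\tau,\mathfrak{z}),$$
each $h_\mu$ is a modular form of (possibly half-integral) weight $k-r/2$ for $\Gamma_1^{(n)}(N)\cap\Gamma^{(n)}(4L)$, with Fourier expansion
$$h_\mu(\tau)=\sum_{T\in\Lambda_n}c(T,\mu)\,e^{2\pi i\,\mathrm{tr}((T-\frac{1}{4} S^{-1}[{}^t\mu])\tau)}.$$
Since $N$ and $L$ are coprime to $p\neq 2$, the level of $h_\mu$ is also coprime to $p$, so $h_\mu$ falls within the scope of Theorem \ref{ThmM2} after passing to a suitable principal congruence subgroup and rescaling $\tau$ to absorb the $\mu$-shift.

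The first step is to translate the finiteness hypothesis on J-equivalence classes of $(T,R)$ into a finiteness statement for each $h_\mu$. Fixing a coset representative $\mu$ of $R$ modulo $\mathbb{Z}^{(n,r)}(2S)$, unpacking (\ref{Jequiv}) shows that the residual J-equivalences preserving $\mu$ act on $T$ through the finite-index subgroup of $a_1\in GL_n(\mathbb{Z})$ with $({}^ta_1-1_n)\mu\in\mathbb{Z}^{(n,r)}(2S)$, the $a_3$ contribution being absorbed into the choice of coset representative. Hence for each $\mu$ only finitely many $GL_n(\mathbb{Z})$-equivalence classes of $T$ satisfy $c(T,\mu)\not\equiv 0\bmod\mathfrak{p}^m$. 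Theorem \ref{ThmM2} then yields $h_\mu\equiv c_\mu\bmod\mathfrak{p}^m$ for some $c_\mu\in\mathcal{O}_\mathfrak{p}$. Matching Fourier terms: if $\frac{1}{4}S^{-1}[{}^t\mu]\notin\Lambda_n$ then no $T\in\Lambda_n$ produces a vanishing shifted exponent and $c_\mu=0$; otherwise the unique zero-shifted-exponent term is $c(\frac{1}{4}S^{-1}[{}^t\mu],\mu)=a(\mu)$. Summing over $\mu$ gives the first conclusion.

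For the divisibility assertion, assume $(p-1)p^{\beta(m)}\nmid 2k-r$ and suppose some $a(\mu)$ is a unit mod $\mathfrak{p}^m$. Then $h_\mu$ is congruent to a nonzero constant. If $k-r/2\in\mathbb{Z}$, Theorem \ref{ThmM2} directly gives $(p-1)p^{\beta(m)}\mid k-r/2$, hence divides $2k-r$. If $k-r/2\in\frac{1}{2}+\mathbb{Z}$, apply the same theorem to $h_\mu^2$, which has integral weight $2k-r$ and is congruent mod $\mathfrak{p}^m$ to the unit $a(\mu)^2$. Either way $(p-1)p^{\beta(m)}\mid 2k-r$, contradicting the hypothesis. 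Hence every $a(\mu)\equiv 0\bmod\mathfrak{p}^m$, so $\varphi\equiv 0\bmod\mathfrak{p}^m$ and every $c(T,R)\equiv 0\bmod\mathfrak{p}^m$.

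The main obstacle is the careful J-equivalence-to-$GL_n$-equivalence reduction together with the level, lattice, and character bookkeeping needed to place each $h_\mu$ within the exact framework of Theorem \ref{ThmM2}; the half-integral weight case of the divisibility assertion is then dispatched by the squaring trick above.
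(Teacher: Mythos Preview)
Your overall strategy---passing to the theta expansion and applying Theorem \ref{ThmM2} to each $h_\mu$---is exactly the paper's approach. One small difference: the paper obtains the finiteness condition on the Fourier indices of $h_\mu$ by observing that J-equivalence preserves the determinant $\det\left(\begin{smallmatrix} T & R/2\\ {}^tR/2 & S\end{smallmatrix}\right)=\det(S)\det\bigl(T-\tfrac14 S^{-1}[{}^tR]\bigr)$, so finitely many J-classes force finitely many determinant values and hence (by reduction theory) finitely many $GL_n(\mathbb{Z})$-classes of the shifted index $T-\tfrac14 S^{-1}[{}^t\mu]$. Your group-theoretic reduction reaches the same conclusion, though you should note that it is the action on this \emph{shifted} index that is genuinely linear (by $[\,\cdot\,][a_1]$), not the action on $T$ itself.

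There is, however, a genuine gap in your argument for the ``moreover'' clause when $m>1$. You assume some $a(\mu)$ is a unit, derive $(p-1)p^{\beta(m)}\mid 2k-r$, and then conclude that every $a(\mu)\equiv 0\bmod\mathfrak{p}^m$. But the negation of ``$a(\mu)$ is a unit in ${\mathcal O}_{\mathfrak p}$'' is only $a(\mu)\in\mathfrak{p}$, not $a(\mu)\in\mathfrak{p}^m$. If $v_\mathfrak{p}(a(\mu))=j$ with $0<j<m$, the weight clause of Theorem \ref{ThmM2} does not apply directly (the constant is not a unit), and rescaling $h_\mu$ by $\pi^{-j}$ only yields a congruence modulo $\mathfrak{p}^{m-j}$, hence $(p-1)p^{\beta(m-j)}\mid 2k-r$, which is strictly weaker. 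The paper's own proof is entirely silent on the ``moreover'' part, so this subtlety is not addressed there either; for $m=1$ your argument is complete as written.
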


\begin{Rem} A result of similar type (for $m=n=1$) was given by
Choi, Choie and Richter \cite[Proposition 3]{C-C-R}; they imposed a congruence condition on
all but finitely many Fourier coefficients (not on J-equivalence classes);
under this stronger condition they show that the Jacobi form is congruent 
zero modulo ${\mathfrak p}$.
\end{Rem}

Next, we shall state our results on ``mod $\frak{p}$ singular modular forms'': 

Let $v_\frak{p}$ be the normalized additive valuation with respect to $\frak{p}$. We define two values $v_\frak{p}(f)$ and $v_\frak{p}^{(n')}(f)$ for $f\in M_k(\Gamma ,\chi )_K$ by 
\begin{align*}
&v_\frak{p}(f):=\min \{v_\frak{p}(a_f(T))\;|\;T \in \Lambda _{n} \}, \\
&v^{(n')}_\frak{p}(f):=\min \{v_\frak{p}(a_f(T))\;|\;T \in \Lambda _{n},\ {\rm rank}(T)=n' \}\quad (0\le n'\le n). 
\end{align*}
Then we have our second main theorem: 
\begin{Thm}
\label{ThmM3}
Let $p$ be an odd prime and $0< k \in \mathbb{Z}$ or $\frac{1}{2}+\mathbb{Z}$. Assume that $\psi ^2$ has a conductor coprime to $p$. Let $f\in M_k(\Gamma _0^{(n)}(N),\psi )_{K}$ ($4|N$ unless $k\in \mathbb{Z}$) and $\frak{p}$ be a prime ideal with $\frak{p}|p$ in $K$. Then 
$v _\frak{p}^{(n'+1)}(f)=v_\frak{p}^{(n')}(f)+m$ ($n'<n$) with $m\geq 1$ is 
possible only if $2k-n'\equiv 0$ mod $(p-1)p^{\beta(m)}$.  
\end{Thm}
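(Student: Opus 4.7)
The strategy is to reduce, via the Siegel $\Phi$ operator, to a modular form of degree $n'+1$ whose top-rank Fourier coefficients are all divisible by $\mathfrak{p}^m$, and then to extract the divisibility of $2k-n'$ by applying Theorem \ref{ThmJ} to a single carefully chosen Fourier--Jacobi coefficient of index $T^{*}$ with $T^{*}$ a rank-$n'$ unit coefficient.

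\emph{Step 1: Normalization and reduction to degree $n'+1$.} After multiplying $f$ by a scalar in $K^\times$, we may assume $v_{\mathfrak{p}}^{(n')}(f)=0$, so some rank-$n'$ Fourier coefficient is a $\mathfrak{p}$-adic unit and every rank-$(n'+1)$ coefficient has $\mathfrak{p}$-valuation $\geq m$. The upper-triangular symplectic matrices $\left(\begin{smallmatrix} U & 0\\ 0 & {}^tU^{-1}\end{smallmatrix}\right)$ lie in $\Gamma_0^{(n)}(N)$ for every $U\in GL_n(\mathbb{Z})$, so (\ref{modular}) yields $a_f(T[U])=\psi(\det U)(\det U)^{k}a_f(T)$; hence $v_{\mathfrak{p}}(a_f(T))$ depends only on the $GL_n(\mathbb{Z})$-equivalence class of $T$. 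Using such an equivalence, bring a unit rank-$n'$ coefficient to the matrix $\mathrm{diag}(0_1,T^{*},0_{n-n'-1})$ with $T^{*}\in\Lambda_{n'}^{+}$. Applying $\Phi^{n-n'-1}$ produces
\[
g:=\Phi^{n-n'-1}(f)\in M_k(\Gamma_0^{(n'+1)}(N),\psi)_{\mathcal{O}_\mathfrak{p}},\qquad a_g(T')=a_f(\mathrm{diag}(T',0_{n-n'-1})).
\]
Thus $a_g(\mathrm{diag}(0_1,T^{*}))$ is a unit, while every rank-$(n'+1)$ coefficient of $g$ is divisible by $\mathfrak{p}^{m}$.

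\emph{Step 2: Fourier--Jacobi expansion with splitting $1+n'$.} Write
\[
g(\tau,\mathfrak{z},\tau')=\sum_{S\in\Lambda_{n'},\,S\geq 0}\phi_S(\tau,\mathfrak{z})\,e^{2\pi i\,\mathrm{tr}(S\tau')},\qquad \tau\in\mathbb{H}_1,\ \tau'\in\mathbb{H}_{n'},\ \mathfrak{z}\in\mathbb{C}^{1,n'}.
\]
For $S\in\Lambda_{n'}^{+}$ the component $\phi_S$ is a degree-$1$ Jacobi form of weight $k$ and index $S$ (so here $r=n'$), with coefficients $c_S(T_1,R)=a_g\bigl(\begin{smallmatrix} T_1 & R/2\\ {}^tR/2 & S\end{smallmatrix}\bigr)$. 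Specialize to $S=T^{*}$. The big matrix has rank $n'+1$ iff $4T_1-R(T^{*})^{-1}{}^tR>0$, and by Step~1 such coefficients satisfy $c_{T^{*}}(T_1,R)\equiv 0\pmod{\mathfrak{p}^{m}}$. The remaining $(T_1,R)$, for which $c_{T^{*}}(T_1,R)$ may be a unit, satisfy $4T_1=R(T^{*})^{-1}{}^tR$; under the J-equivalence (which shifts $R$ by $2\lambda T^{*}$, $\lambda\in\mathbb{Z}^{1,n'}$) they form only finitely many classes. Moreover $c_{T^{*}}(0,0)=a_g(\mathrm{diag}(0_1,T^{*}))$ is a unit.

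\emph{Step 3: Apply Theorem \ref{ThmJ}.} The Jacobi form $\phi_{T^{*}}$ satisfies the finiteness hypothesis of Theorem \ref{ThmJ}. Suppose toward contradiction that $2k-n'\not\equiv 0\pmod{(p-1)p^{\beta(m)}}$. The second conclusion of Theorem \ref{ThmJ} then forces \emph{every} Fourier coefficient of $\phi_{T^{*}}$ to be divisible by $\mathfrak{p}^{m}$; in particular $c_{T^{*}}(0,0)\equiv 0\pmod{\mathfrak{p}^{m}}$, contradicting that $c_{T^{*}}(0,0)$ is a unit (recall $m\geq 1$). Hence $(p-1)p^{\beta(m)}\mid 2k-n'$.

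\emph{Main obstacle.} The delicate point is verifying the level hypotheses of Theorem \ref{ThmJ}: that $N$ and the level of $S=T^{*}$ both be coprime to $p$. The assumption on the conductor of $\psi^{2}$ is used to control the $N$-part; for the level of $T^{*}$ one must argue that some representative of a rank-$n'$ unit coefficient can be chosen (or a suitable $p$-stabilization of $f$ produced) with level prime to $p$. The half-integral weight case is handled in parallel, invoking the corresponding version of Theorem \ref{ThmJ} for Jacobi forms of half-integral weight.
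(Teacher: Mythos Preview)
Your Steps 1 and 2 coincide with the paper's argument: normalize so that $v_{\mathfrak p}^{(n')}(f)=0$, apply $\Phi^{n-n'-1}$, pick a rank-$n'$ matrix $T^{*}$ with unit coefficient, and look at the Fourier--Jacobi coefficient of index $T^{*}$ and its theta expansion. The paper, like you, obtains that $h_0$ is congruent to a $\mathfrak p$-adic unit modulo $\mathfrak p^{m}$.

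The genuine gap is in Step 3. The ``moreover'' clause of Theorem \ref{ThmJ} requires \emph{both} $N$ and the level of the index $S=T^{*}$ to be coprime to $p$. Neither hypothesis is available in Theorem \ref{ThmM3}: the statement allows $p\mid N$ (only the conductor of $\psi^{2}$ is constrained), and there is no mechanism for choosing $T^{*}$ with level prime to $p$. Your own ``Main obstacle'' paragraph notices this, but the suggested fixes do not work: the paper explicitly remarks after the proof that one \emph{cannot} in general assure that the level of $T^{*}$ is coprime to $p$, and this is precisely why only the congruence for $2k-n'$ (rather than a sharper one for $k-\tfrac{n'}{2}$ when $n'$ is even) is obtained. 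A $p$-stabilization trick does not help either, since it changes the level in the wrong direction.

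The paper's route around this obstacle is different from invoking Theorem \ref{ThmJ}. It analyzes $h_0$ directly and proves (Lemma \ref{Lem7}) that $h_0^{2}\in M_{2k-n'}(\Gamma_0^{(1)}(NL),\psi^{2}\chi_{-4}^{n'})$, where $L$ is the level of $2T^{*}$; the key point is that the theta block $\Theta_{T^{*}}[0]$ is itself modular for $\Gamma_0^{(1)}(L)$, which forces the transformation matrix $U(\gamma)$ to be block diagonal. One then applies not the basic Serre--Katz--Rasmussen weight congruence (which needs level coprime to $p$) but its Variant~(*), valid for $\Gamma_0^{(1)}(Np^{t})$ with \emph{arbitrary} $t$ as long as the square of the Nebentypus has conductor prime to $p$. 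Since the Nebentypus of $h_0^{2}$ is $\psi^{2}\chi_{-4}^{n'}$ and $p$ is odd, the hypothesis on $\psi^{2}$ suffices, and the possibly $p$-divisible level $NL$ causes no trouble. This is what the squaring and the condition on $\psi^{2}$ are for; your proposal does not use either, which is a signal that the level issue has not been handled.
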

Using this theorem, we get properties of the weights of 
``mod $\frak{p}$ singular modular forms'' defined as
\begin{Def}
If $f\in M_k(\Gamma , \chi )_{{\mathcal O}_{\frak{p}}}$ satisfies that $a_f(T)\equiv 0$ mod $\frak{p}$ for all $T\in \Lambda _n^+$, 
then we say that $f$ is $mod$ $\frak{p}$ $singular$ $modular$ $form$. We define the $\frak{p}$-rank of $f$ by the maximum of ranks $r$ of $T\in \Lambda _n$ such that there is a Fourier coefficient $a_f(T)$ not congruent to $0$ modulo $\frak{p}$. 
\end{Def} 
From Theorem \ref{ThmM3} we get 
\begin{Cor}
\label{Cor3}
Let $p$ be an odd prime and $0< k \in \mathbb{Z}$ or $\frac{1}{2}+\mathbb{Z}$. Assume that $f\in M_k(\Gamma _0^{(n)}(N),\psi )_{{\mathcal O}_{\frak{p}}}$ ($4|N$ unless $k\in \mathbb{Z}$) is a mod $\frak{p}$ singular modular form with $\frak{p}$-rank $r$ ($r<n$) and $\psi ^2$ has a conductor coprime to $p$, where $\frak{p}$ is a prime ideal with $\frak{p}|p$ in $K$. Then
\[2k-r\equiv 0 \bmod{p-1}.\]
In particular, only $r$ even (odd) may occur if $k\in \mathbb{Z}$ ($k\not \in \mathbb{Z}$).   
\end{Cor}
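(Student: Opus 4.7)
The plan is to deduce this as an immediate consequence of Theorem \ref{ThmM3} by translating the condition ``$\mathfrak{p}$-rank equals $r$'' into the language of the valuations $v_{\mathfrak{p}}^{(n')}(f)$ and then invoking the theorem at the single step $n' = r \to r+1$.

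First I would unpack the definition. By the definition of $\mathfrak{p}$-rank, there exists some $T_0 \in \Lambda_n$ with $\mathrm{rank}(T_0) = r$ and $a_f(T_0) \not\equiv 0 \bmod \mathfrak{p}$, while for every $T \in \Lambda_n$ with $\mathrm{rank}(T) > r$ we have $a_f(T) \equiv 0 \bmod \mathfrak{p}$. Translating this into valuations: $v_{\mathfrak{p}}^{(r)}(f) = 0$, and $v_{\mathfrak{p}}^{(r+1)}(f) \ge 1$. Hence there is an integer $m \ge 1$ with
\[ v_{\mathfrak{p}}^{(r+1)}(f) = v_{\mathfrak{p}}^{(r)}(f) + m. \]
Since $r < n$ by hypothesis, the index $n' = r$ is a legitimate step to feed into Theorem \ref{ThmM3}.

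Next I would apply Theorem \ref{ThmM3} directly. The hypotheses of the corollary ($p$ odd, $f \in M_k(\Gamma_0^{(n)}(N), \psi)_{\mathcal{O}_{\mathfrak{p}}} \subset M_k(\Gamma_0^{(n)}(N), \psi)_{K}$, $4 \mid N$ when $k$ is half-integral, and $\psi^2$ of conductor coprime to $p$) match those of the theorem, so we may conclude
\[ 2k - r \equiv 0 \bmod (p-1) p^{\beta(m)}. \]
Because $(p-1) p^{\beta(m)}$ is automatically a multiple of $p-1$, we obtain the weaker congruence $2k - r \equiv 0 \bmod p-1$ asserted by the corollary.

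For the final parity assertion, since $p$ is odd, $p-1$ is even, so the congruence forces $2k - r$ to be even. If $k \in \mathbb{Z}$ then $2k$ is even and hence $r$ must be even; if $k \in \tfrac{1}{2} + \mathbb{Z}$ then $2k$ is odd and hence $r$ must be odd. There is no real obstacle here: the entire work is carried by Theorem \ref{ThmM3}, and the corollary amounts to the observation that the $\mathfrak{p}$-rank being strictly smaller than $n$ produces exactly one non-trivial rank-jump to which the theorem applies.
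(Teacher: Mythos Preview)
Your proposal is correct and is exactly the argument the paper intends: the paper simply writes ``From Theorem \ref{ThmM3} we get'' and states the corollary without further detail, and your unpacking of the $\mathfrak{p}$-rank condition into $v_{\mathfrak{p}}^{(r)}(f)=0$, $v_{\mathfrak{p}}^{(r+1)}(f)\ge 1$ followed by an application of Theorem \ref{ThmM3} at $n'=r$ is precisely the intended deduction. One very minor point: you tacitly assume $v_{\mathfrak{p}}^{(r+1)}(f)$ is finite when you set $m=v_{\mathfrak{p}}^{(r+1)}(f)$; if it is infinite (all rank $r{+}1$ coefficients vanish identically) the same conclusion still follows, since the proof of Theorem \ref{ThmM3} only uses $v_{\mathfrak{p}}^{(r+1)}(f)\ge m$, so one may simply take $m=1$.
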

As an easy consequence of Theorem \ref{ThmM2} and Corollary \ref{Cor3}, we have
\begin{Cor}
Let $p$ be an odd prime and $0< k \in \mathbb{Z}$. Assume that $N$ is coprime to $p$ and $f\in M_k(\Gamma _1^{(2)}(N))_{\mathbb{Z}_{(p)}}$. 
If there are at most only finitely many inequivalent $S_1, \dots , S_h\in \Lambda _2^+$ 
such that $a_f(S_i)\not \equiv 0$ mod $p$, then $f\equiv c$ mod $p$ for some $c\in \mathbb{Z}_{(p)}$.  
\end{Cor}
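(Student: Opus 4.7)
The plan is to show that every rank-$1$ Fourier coefficient of $f$ vanishes modulo $p$ and then invoke Theorem \ref{ThmM2}. I will expand $f$ as a Fourier--Jacobi series $f(Z)=\sum_{t\geq 0}\varphi_t(\tau,\mathfrak{z})\,e^{2\pi i t\tau'}$; each $\varphi_t$ (for $t\geq 1$) is then a Jacobi form of index $t$ and weight $k$ for the Jacobi group attached to $\Gamma_1^{(1)}(N)$, with coefficients $c_t(T,R)=a_f\bigl(\begin{smallmatrix}T & R/2\\ R/2 & t\end{smallmatrix}\bigr)$. The hypothesis of the corollary translates into the assertion that for each $t$, only finitely many J-equivalence classes $(T,R)$ of $\varphi_t$ have $c_t(T,R)\not\equiv 0\pmod{p}$ (the rank-$2$ ones via the hypothesis on $f$; the rank-$\leq 1$ ones because only finitely many of them exist per index $t$), so Theorem \ref{ThmJ} will apply to every $\varphi_t$.

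For $t$ coprime to $p$, the ``moreover'' part of Theorem \ref{ThmJ} yields $\varphi_t\equiv 0\pmod{p}$: both $N$ and the level of $S=t$ are coprime to $p$, and $2k-1$ is odd while $p-1$ is even. Extracting the $(T,R)=(0,0)$-coefficient and using the symmetry $a_f\bigl(\begin{smallmatrix}t & 0\\ 0 & 0\end{smallmatrix}\bigr)=a_f\bigl(\begin{smallmatrix}0 & 0\\ 0 & t\end{smallmatrix}\bigr)$, which is induced by $\mathrm{diag}(U,{}^tU^{-1})\in\Gamma_1^{(2)}(N)$ for $U=\bigl(\begin{smallmatrix}0 & -1\\ 1 & 0\end{smallmatrix}\bigr)$, I obtain $a_{\Phi f}(t)\equiv 0\pmod{p}$ for all $t$ coprime to $p$.

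For $t$ divisible by $p$, only the first part of Theorem \ref{ThmJ} is available, giving $\varphi_t\equiv\sum_\mu a(\mu)\Theta_t[\mu]\pmod{p}$ with $a(0)=c_t(0,0)=a_{\Phi f}(t)$. A direct calculation shows that the $(T,R)=(t\lambda^2,2\lambda)$-coefficient of $\Theta_t[\mu]$ equals $1$ if $\mu=0$ and $0$ for $\mu\in\{1,\dots,2t-1\}$ (since $\lambda-\mu/(2t)$ would otherwise have to be an integer). Therefore $a_f\bigl(\begin{smallmatrix}t\lambda^2 & \lambda\\ \lambda & t\end{smallmatrix}\bigr)\equiv a_{\Phi f}(t)\pmod{p}$ for every $\lambda\in\mathbb{Z}$. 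For $\lambda\geq 1$ (and $t\geq p\geq 3$) the matrices $S_\lambda=\bigl(\begin{smallmatrix}t\lambda^2 & \lambda\\ \lambda & t\end{smallmatrix}\bigr)$ are positive definite with pairwise distinct determinants $\lambda^2(t^2-1)$, so they are pairwise $\Gamma_1^{(2)}(N)$-inequivalent. If $a_{\Phi f}(t)\not\equiv 0\pmod{p}$, we would obtain infinitely many inequivalent positive definite classes with nonvanishing Fourier coefficient, contradicting the hypothesis; hence $a_{\Phi f}(t)\equiv 0\pmod{p}$ for $p\mid t$ as well.

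Combining the two cases gives $\Phi f\equiv a_f(0)\pmod{p}$, so every rank-$1$ Fourier coefficient of $f$ vanishes modulo $p$. Viewing $f\in M_k(\Gamma^{(2)}(N))_{\mathbb{Z}_{(p)}}$, only finitely many $\Gamma^{(2)}(N)$-inequivalent $S\in\Lambda_2$ have $a_f(S)\not\equiv 0\pmod{p}$ (one rank-$0$ class; no rank-$1$ class; finitely many rank-$2$ classes, since each $\Gamma_1^{(2)}(N)$-orbit is a union of at most $[\Gamma_1^{(2)}(N):\Gamma^{(2)}(N)]<\infty$ many $\Gamma^{(2)}(N)$-orbits). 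Theorem \ref{ThmM2} then gives $f\equiv c\pmod{p}$ for some $c\in\mathbb{Z}_{(p)}$. Corollary \ref{Cor3} enters via a complementary case analysis on the $\mathfrak{p}$-rank: $\mathfrak{p}$-rank $=1$ would force $2k-1\equiv 0\pmod{p-1}$ (odd versus even, impossible), $\mathfrak{p}$-rank $=2$ is excluded by the theta argument above combined with Theorem \ref{ThmM2}, so only $\mathfrak{p}$-rank $=0$ remains. The hardest step will be the theta-coefficient computation for $p\mid t$ and verifying that $\{S_\lambda\}_{\lambda\geq 1}$ yields genuinely distinct $\Gamma_1^{(2)}(N)$-classes.
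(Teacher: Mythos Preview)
Your argument has a genuine gap in the case $p\mid t$. The claim that the $(T,R)=(t\lambda^2,2\lambda)$--coefficient of $\Theta_t[0]$ equals $1$ rests on reading the paper's displayed formula for $\Theta_S[\mu]$ literally. That formula, however, is misprinted (a factor of $S$ is missing in the ${\mathfrak z}$--exponent): in the scalar case the $\zeta$--exponent of $\Theta_t[\mu]$ is $2t\lambda+\mu$, not $2\lambda+\mu/t$. You can detect this from the paper's own formula for $h_\mu$, which extracts precisely the coefficients $c(T,\mu)$ with second argument equal to $\mu$; for the decomposition $\varphi=\sum_\mu h_\mu\Theta_t[\mu]$ to reproduce the Fourier expansion of $\varphi$, the function $\Theta_t[\mu]$ must have $R$--support in the residue class $\mu\bmod 2t$, and indeed the printed formula would give non-integral $\zeta$--exponents whenever $t\nmid\mu$. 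With the correct formula the support of $\Theta_t[0]$ consists of the pairs $(T,R)=(t\lambda^2,2t\lambda)$, and the corresponding matrix
\[
\begin{pmatrix} t\lambda^2 & t\lambda\\ t\lambda & t\end{pmatrix}
\]
has determinant zero. So your family $\{S_\lambda\}$ lies in the rank--one locus, not in $\Lambda_2^+$, and the hypothesis of the corollary yields no contradiction. More generally, the first clause of Theorem~\ref{ThmJ} only tells you that $c_t(T,R)\equiv 0\pmod p$ whenever $4tT>R^2$, which is exactly the mod~$p$ singularity already delivered by Proposition~\ref{Prop4}; on the rank--one coefficients it returns a tautology and gives no new leverage.

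The paper's intended route is shorter and avoids this issue entirely. Proposition~\ref{Prop4} (from the proof of Theorem~\ref{ThmM2}) shows directly that all rank--$2$ Fourier coefficients vanish modulo~$p$, so $f$ is mod~$p$ singular of ${\mathfrak p}$--rank $r\le 1$. Corollary~\ref{Cor3} then forces $2k-r\equiv 0\pmod{p-1}$; since $2k-1$ is odd while $p-1$ is even, $r=1$ is impossible, hence $r=0$ and $f\equiv a_f(0)\pmod p$. You allude to this in your closing sentence, but your main line tries to establish rank--one vanishing by the separate theta computation, and that computation fails for $p\mid t$ as described above.
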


\section{Proof of Theorem \ref{ThmM2}}
\subsection{The case $\boldsymbol{n=1}$}
First we shall prove 
\begin{Prop} 
\label{Prop}
Let $f \in M_k(\Gamma _1^{(1)}(N))_{\mathcal{O}_\frak{p}}$ with $0<k\in \mathbb{Z}$ or $\frac{1}{2}+\mathbb{Z}$. 
If $a_f(n)\equiv 0$ mod $\frak{p}^m$ for all but finitely many $n$, 
then $f\equiv c$ mod $\frak{p}^m$ for some $c\in \mathcal{O}_\frak{p}$. 
\end{Prop}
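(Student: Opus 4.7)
The plan is to work modulo $\mathfrak{p}^m$: the reduction $\tilde{f}=\sum_n a_n q^n$ is by hypothesis a polynomial of degree $N_0:=\max\{n: a_n(f)\not\equiv 0 \pmod{\mathfrak{p}^m}\}$, and the goal is to show $a_j\equiv 0 \pmod{\mathfrak{p}^m}$ for all $1\le j\le N_0$. After decomposing $f$ into its Nebentypus components, I may fix a Dirichlet character $\chi$ modulo $N$.

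The key move is to apply the Hecke operator $T_\ell$ at a single prime $\ell$ coprime to $Np$ and larger than both $N_0$ and the Sturm bound $D_k$ for $M_k(\Gamma_1^{(1)}(N))_{\mathcal{O}_{\mathfrak{p}}/\mathfrak{p}^m}$. The formula $a_n(T_\ell f)=a_{\ell n}(f)+\chi(\ell)\ell^{k-1}a_{n/\ell}(f)\mathbf{1}_{\ell\mid n}$ then yields $a_n(T_\ell f)\equiv 0\pmod{\mathfrak{p}^m}$ for $1\le n\le D_k$, while $a_{\ell j}(T_\ell f)\equiv \chi(\ell)\ell^{k-1}a_j\pmod{\mathfrak{p}^m}$ for $1\le j\le N_0$, and the constant term is $(1+\chi(\ell)\ell^{k-1})a_0$. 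By Sturm's theorem the submodule $S\subset M_k(\Gamma_1^{(1)}(N))_{\mathcal{O}_{\mathfrak{p}}/\mathfrak{p}^m}$ cut out by $a_1=\cdots=a_{D_k}\equiv 0$ is either zero or cyclic, and $T_\ell f\in S$.

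If $S=0$, then $T_\ell f\equiv 0\pmod{\mathfrak{p}^m}$ and the unit scalar $\chi(\ell)\ell^{k-1}$ appearing in $a_{\ell j}(T_\ell f)$ immediately gives $a_j\equiv 0$. Otherwise let $F_0$ generate $S$ with (after rescaling) unit constant term $\alpha_0$, and write $T_\ell f\equiv \lambda(\ell) F_0 \pmod{\mathfrak{p}^m}$. Matching coefficients at indices $n\ge 1$ with $\ell\nmid n$ (which vanish on $T_\ell f$) yields $\lambda(\ell)a_n(F_0)\equiv 0\pmod{\mathfrak{p}^m}$; Dirichlet's theorem (using $p\ne 2$) lets me further choose $\ell>n$ so that $1+\chi(\ell)\ell^{k-1}$ is a unit modulo $\mathfrak{p}$, making $\lambda(\ell)$ a unit and forcing $a_n(F_0)\equiv 0\pmod{\mathfrak{p}^m}$. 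Since $n$ was arbitrary, $F_0\equiv\alpha_0\pmod{\mathfrak{p}^m}$, so $T_\ell f$ is congruent to a constant and $a_j\equiv 0$ follows again. In all cases $f\equiv a_0\pmod{\mathfrak{p}^m}$, as required. For the ``moreover'' clause, $c=a_0$ being a unit means that $1$ is realized as the $q$-expansion of a weight-$k$ modular form modulo $\mathfrak{p}^m$, and Rasmussen's refinement of Serre's weight-congruence theorem (on which the paper's approach explicitly depends) then forces $(p-1)p^{\beta(m)}\mid k$.

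The half-integral weight case follows the same template, with $T_\ell$ replaced by $T_{\ell^2}$ and the corresponding half-integral Sturm bound. The main obstacle is the second case of the Sturm-rank analysis: showing that the essentially unique ``Sturm-phantom'' $F_0$ must in fact be congruent to a constant. This step depends on the freedom to vary $\ell$ while simultaneously controlling $\ell>\max(n,D_k,N_0)$ and the non-vanishing of $1+\chi(\ell)\ell^{k-1}$ modulo $\mathfrak{p}$, which is where Dirichlet's theorem and the restriction $p\ne 2$ come in.
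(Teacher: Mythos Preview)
Your overall strategy---hit $f$ with a Hecke operator at a large prime and invoke a Sturm bound---is the same as the paper's. The execution diverges: the paper takes \emph{two} primes $\ell_1,\ell_2\equiv 1\pmod{pN}$, uses pigeonhole on the first Sturm-many coefficients to arrange $f\mid T_{\ell_1^2}\equiv f\mid T_{\ell_2^2}\pmod{\mathfrak p^m}$, and then simply compares the $\ell_2^2 d$-th coefficients of both sides to force $a_f(d)\equiv 0$. Your single-prime route through the ``Sturm module'' $S$ has a genuine gap in Case~2.

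You write $T_\ell f\equiv\lambda(\ell)F_0$ and assert that making $1+\chi(\ell)\ell^{k-1}$ a unit ``makes $\lambda(\ell)$ a unit''. But comparing constant terms gives
\[
\lambda(\ell)\,\alpha_0\;\equiv\;(1+\chi(\ell)\ell^{k-1})\,a_0(f)\pmod{\mathfrak p^m},
\]
so $\lambda(\ell)$ is a unit only when $a_0(f)$ is---and nothing in the hypotheses forces that. If $a_0(f)\in\mathfrak p\setminus\mathfrak p^m$ your argument stalls: you cannot cancel $\lambda(\ell)$ to deduce $a_n(F_0)\equiv 0$, nor can you fall back to Case~1. (There is a prior gap as well: a generator $F_0$ of $S$ need not be rescalable to have unit constant term, since the injection $S\hookrightarrow\mathcal O_{\mathfrak p}/\mathfrak p^m$ via $a_0$ may land in $\mathfrak p^j/\mathfrak p^m$ with $j>0$.) Two further points: decomposing $f$ into Nebentypus components can destroy $\mathcal O_{\mathfrak p}$-integrality when $p\mid\phi(N)$ or the character values lie outside $K$---the paper avoids this entirely by choosing $\ell\equiv 1\pmod N$, so that $\chi(\ell)=1$ for every $\chi$ and the Hecke formula applies directly on $M_k(\Gamma_1^{(1)}(N))$; and your explicit reliance on $p\neq 2$ loses a case that the proposition, as stated, covers. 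The paper's two-prime comparison sidesteps all of these issues at once: it never requires any $\lambda$, any $\alpha_0$, or $a_0(f)$ to be a unit.
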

\begin{proof}[Proof of Proposition \ref{Prop}]
We take a number $s_0>0$ satisfying the following property: 

\textit{If $f \in M_k(\Gamma _1^{(1)}(N))_{\mathcal{O}_\frak{p}}$ satisfies that 
$a_f(n)\equiv 0$ mod $\frak{p}^m$ for all $n$ with $0\le n\le s_0$, then $f\equiv 0$ mod $\frak{p}^m$.}

For a modular form of integral weight, we may chose $s_0$ as the Sturm bound $s_0:=[k/12[SL_2(\mathbb{Z}):\Gamma _1^{(1)}(N)]]\in \mathbb{Z}_{\ge 0}$ (cf. Sturm \cite{St}). 
Note that Sturm stated the existence of such a bound only for $m=1$. However Rasmussen \cite{Ra} studied the case of general $m$. 

For a modular form $f$ of half-integral weight we can obtain an analogue of the Sturm type bound as above
by considering $f\cdot \theta$, where $\theta$ is the standard theta series of weight $\frac{1}{2}$. 
(It does not suffice to consider $f^2$ to get such a bound.)

We shall consider the case $k\in \frac{1}{2}+\mathbb{Z}$. 
By the assumption, we can take $d:=\min \{d\in \mathbb{Z}_{\ge 0} \;|\; 
\forall n>d,\ a_f(n)\equiv 0 \bmod{\frak{p}^m} \}$. 
If $m=1$, then $d$ is the usual degree of the polynomial $\widetilde{f}$. 
For a prime $l$ coprime to $N$, we denote by $f\mapsto f|T_{l^2}$ the action of the 
Hecke operator $T_{l^2}$ on $f\in M_k(\Gamma_1^{(1)}(N))$ as explained in \cite[IV,\S3]{Kob}. Then we can find two 
primes $l_1$, $l_2$ ($l_1>l_2>d$) such that $l_1 \equiv l_2\equiv 1$ mod $pN$ and 
$f|T_{l_1^2}\equiv f|T_{l_2^2}$ mod $\frak{p}^m$. 
The existence such the primes follows from the Sturm bound. In fact, for the Sturm bound $s_0$, 
\begin{align*}
&\# \left \{\left(\widetilde{a_{f|T_{l^2}}(0)}, \widetilde{a_{f|T_{l^2}}(1)}, \cdots, \widetilde{a_{f|T_{l^2}}(s_0)}\right) \in ({\mathcal O}/\frak{p}^m {\mathcal O})^{s_0+1} \: \Big{|} \: l>d:prime,\ l\equiv 1 \bmod{pN} \right \}\\
&\le p^{(s_0+1)g},
\end{align*}
where $p^g:=\# ({\mathcal O}/\frak{p}^m {\mathcal O})$.  
Hence, in the infinite set $\{l:prime \:|\: l \equiv 1 \bmod{pN}\}$, one can take two primes $l_{1}$ and $l_{2}$ such that the first $s_0+1$ Fourier coefficients of $f|T_{l_i^2}$ are the same modulo $\frak{p}^m$ according to the 
pigeonhole principle. For these primes, we have $f|T_{l_{1}^2}\equiv f|T_{l_{2}^2}$ mod $\frak{p}^m$. 

Now recall that 
\[M_k(\Gamma _1^{(1)}(N))=
\bigoplus _\chi M_k(\Gamma _0^{(1)}(N),\chi ),\]
where $\chi$ runs over all Dirichlet characters modulo $N$. (This decomposition holds for both cases $k\in \frac{1}{2}+\mathbb{Z}$ and $k\in \mathbb{Z}$.) Furthermore, there is a simple formula for the action of $T_{l^2}$ on elements of $M_k(\Gamma_0^{(1)}(N),\chi)$,
see \cite[IV,prop.13]{Kob}. The condition $\chi (l_i)=1$ ($l_i\equiv 1$ mod $N$) for each character $\chi$ modulo $N$, 
allows us to apply this formula to $f\in M_k(\Gamma^{(1)}_1(N))$:
\begin{align*}
f|T_{l_i^2}
&=\sum _{n=0}^{\infty } a_f(l_i^2 n)q^n+l_i^{k-\frac{3}{2}}\sum _{n=0}^{\infty }\left( \frac{(-1)^{k-\frac{1}{2}}n}{l_i}\right)a_f(n)q^{n}+l_i^{2k-2}\sum _{n=0}^{\infty }a_f(n)q^{l_i^2n}.
\end{align*}
Note that $a_f(l_i^2 n)\equiv 0$ mod $\frak{p}^m$ for all $n\ge 1$ because of $l_i>d$. Since the $l_2^2d$ th coefficient in $f|T_{l_1^2}$ does not appear and that of $f|T_{l_2^2}$ is $l_2^{2k-2}a_f(d)$ for $d\neq 0$. The condition $l_2\equiv 1$ mod $pN$ implies $l_2\in {\mathcal O}_{\frak{p}}^{\times }$. Hence, $a_f(d)\equiv 0$ mod $\frak{p}^m$ unless $d=0$. However, if $d\neq 0$, this is a contradiction for the choice (minimality) of $d$. 
Therefore it must be $d=0$ and hence $f\equiv c$ mod $\frak{p}^m$ for some $c\in {\mathcal O}_{\frak{p}}$.  

For integral weights, one can proceed as above, using the Hecke operators $T_{l_i^2}$.
It is however somewhat easier to work with the usual Hecke 
operators $T_{l_i}$ in this case, using 
\[f|T_{l_
i}=\sum _{n=0}^{\infty } a_f(l_i n)q^n+l^{k-1}
\sum _{n=0}^{\infty }a_f(n)q^{l_i n}.\]
This formula is well-known, see e.g. \cite[p.171]{DiSh} for integral weights, $\Gamma_1^{(1)}(N)$ and $l_i\equiv 1\bmod N$. 
One may now argue in a way quite similar to the half-integral weight case.
\end{proof}

\begin{proof}[Proof of the case $n=1$ of Theorem \ref{ThmM2}]
To cover the case of principal congruence subgroups, we observe that
$\Gamma ^{(1)}(N)$ is conjugate by 
$\left( \begin{smallmatrix} N & 0\\
0 & 1\end{smallmatrix}\right)$ to 
\[\left \{ \begin{pmatrix} a & b \\ c & d \end{pmatrix} \;\Big{|}\; 
a\equiv d \equiv 1 \bmod{N},\ c\equiv 0\bmod{N^2} \right \}
\supset \Gamma _1^{(1)}(N^2).\] 
Therefore, the assertion of the theorem \ref{ThmM2} follows for 
$f\in M_{k}(\Gamma ^{(1)}(N))$ with $0<k\in \mathbb{Z}$ or $\frac{1}{2}+\mathbb{Z}$ from the propositions above, applied to $f(N\tau)$, which is a modular form for
$\Gamma^{(1)}_1(N^2)$.
\end{proof}

\subsection{The case of general ${\boldsymbol n}$}
\label{subsec4.2}
We start with proving that
\begin{Prop}
\label{Prop4}
Let $k$ be a positive integer and 
$f\in M_{k}(\Gamma _1^{(n)}(N))_{{\mathcal O}_{\frak{p}}}$. 
If there are at most only finitely many inequivalent 
$S_1, \dots , S_h\in \Lambda _n^+$ such that 
$a_f(S_i)\not \equiv 0$ mod $\frak{p}^m$, 
then $a_f(T)\equiv 0$ mod $\frak{p}^m$ for all 
$T\in \Lambda _n^+$. Here $\Lambda _n^+$ is 
the set of all positive definite elements of $\Lambda _n$. 
\end{Prop}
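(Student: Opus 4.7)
The plan is to prove Proposition~\ref{Prop4} by induction on the degree $n$, with the inductive statement enlarged to cover both integral and half-integral weights simultaneously; the base case $n=1$ is Proposition~\ref{Prop}, which already handles both regimes. For the inductive step I would split off the last diagonal entry: for $Z=\left(\begin{smallmatrix}\tau & z\\ {}^tz & \tau'\end{smallmatrix}\right)$ with $\tau\in\mathbb{H}_{n-1}$ and $\tau'\in\mathbb{H}_1$, the Fourier--Jacobi expansion reads
\[
f(Z)=\sum_{t\ge 0}\phi_t(\tau,z)\,e^{2\pi i t\tau'},
\]
where $\phi_t$ is a degree-$(n-1)$ Jacobi form of weight $k$ and scalar index $t$ for a suitable subgroup of $\Gamma^J_{n-1,1}$ attached to $\Gamma_1^{(n)}(N)$, with Fourier coefficients $c_t(T_0,r)=a_f\!\left(\begin{smallmatrix}T_0 & r/2\\ {}^tr/2 & t\end{smallmatrix}\right)$. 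The slice $t=0$ only contributes to non--positive-definite coefficients of $f$ and may be ignored.

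For each $t>0$ I would apply the theta expansion from Section~\ref{Jacobi}:
\[
\phi_t(\tau,z)=\sum_{\mu}h_\mu(\tau)\,\Theta_t[\mu](\tau,z),
\]
where $\mu$ ranges over $\mathbb{Z}^{n-1}/2t\mathbb{Z}^{n-1}$ and each $h_\mu$ is a Siegel modular form of degree $n-1$ and half-integral weight $k-\tfrac{1}{2}$ on a congruence subgroup containing $\Gamma^{(n-1)}(4tN)$. The Fourier-expansion formula of Section~\ref{Jacobi} identifies the coefficient of $h_\mu$ at $T_0^{\ast}\in\Lambda_{n-1}$ with $a_f(T)$, where $T$ is the block matrix with top-left $T_0^{\ast}+\mu\,{}^t\mu/(4t)$, off-diagonal $\mu/2$, and bottom-right $t$; a Schur-complement computation shows $T$ is positive definite iff $T_0^{\ast}$ is.

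The crucial point is that the finiteness hypothesis on $f$ passes to each $h_\mu$. The Siegel substitution by $U=\left(\begin{smallmatrix}V & 0\\ 0 & 1\end{smallmatrix}\right)$ with $V\in SL_{n-1}(\mathbb{Z})$ sends $T_0^{\ast}\mapsto {}^tV T_0^{\ast}V$ and $\mu\mapsto {}^tV\mu\bmod 2t$, thereby identifying an $h_\mu$-coefficient with an $h_{{}^tV\mu}$-coefficient; more general Siegel substitutions change $t$. Since the stabilizer of $\mu\bmod 2t$ in $SL_{n-1}(\mathbb{Z})$ and its further intersection with $\{V\equiv 1\bmod 4tN\}$ both have finite index in $SL_{n-1}(\mathbb{Z})$, each $SL_n(\mathbb{Z})$-equivalence class of $f$-coefficients splits into only finitely many equivalence classes in any one $h_\mu$. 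Consequently, for each $(t,\mu)$, only finitely many positive-definite equivalence classes of $h_\mu$-coefficients fail to vanish modulo $\mathfrak{p}^m$.

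Invoking the inductive hypothesis on each $h_\mu$ (in degree $n-1$ and weight $k-\tfrac{1}{2}$) then forces every positive-definite Fourier coefficient of $h_\mu$ to vanish mod $\mathfrak{p}^m$. Summing over $\mu$ kills all positive-definite coefficients of $\phi_t$, and letting $t>0$ vary gives $a_f(T)\equiv 0\bmod \mathfrak{p}^m$ for every $T\in\Lambda_n^+$. The principal obstacle will be the careful bookkeeping of the three equivalence relations in play (full Siegel equivalence for $f$, J-equivalence for $\phi_t$, and the congruence-subgroup equivalence appropriate to $h_\mu$), together with the closely related point that the induction must be set up simultaneously for integral and half-integral weights so that the weight shift $k\mapsto k-\tfrac{1}{2}$ stays within the inductive framework.
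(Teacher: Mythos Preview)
Your inductive scheme is sound in outline but runs the Fourier--Jacobi splitting in the opposite direction from the paper. You peel off the last $1\times 1$ block, so the Jacobi coefficients $\phi_t$ have scalar index and their theta components $h_\mu$ live in degree $n-1$; this forces an induction on $n$ (with the simultaneous integral/half-integral setup you describe). The paper instead peels off an $(n-1)\times(n-1)$ block: writing $Z=\left(\begin{smallmatrix}\tau & {}^t\mathfrak{z}\\ \mathfrak{z} & \tau'\end{smallmatrix}\right)$ with $\tau\in\mathbb{H}_1$ and $\tau'\in\mathbb{H}_{n-1}$, one expands $f(Z)=\sum_{M\in\Lambda_{n-1}}\varphi_M(\tau,\mathfrak{z})\,e^{2\pi i\,\mathrm{tr}(M\tau')}$. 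For each $M_0\in\Lambda_{n-1}^+$ the theta components $h_\mu$ of $\varphi_{M_0}$ are then \emph{elliptic} modular forms of weight $k-\tfrac{n-1}{2}$, and Proposition~\ref{Prop} applies directly with no induction. The finiteness transfer becomes a one-line determinant count: for $T=\left(\begin{smallmatrix} l & \mu/2\\ {}^t\mu/2 & M_0\end{smallmatrix}\right)$ one has $\det T=\bigl(l-\tfrac14 M_0^{-1}[\mu]\bigr)\det M_0$, so finitely many $GL_n(\mathbb{Z})$-classes of $T$ give finitely many determinants and hence finitely many scalar indices $l$ with nonvanishing $h_\mu$-coefficient.

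Your argument has a gap precisely at the bookkeeping step you yourself flag. You claim that each $SL_n(\mathbb{Z})$-class of $T$'s contributes only finitely many $H$-classes of $T_0^\ast$'s, citing that $H=\mathrm{Stab}(\mu\bmod 2t)\cap\{V\equiv 1\bmod 4tN\}$ has finite index in $SL_{n-1}(\mathbb{Z})$. But finite index only controls the passage from $SL_{n-1}(\mathbb{Z})$-classes of $T_0^\ast$ to $H$-classes; it does not explain why a single $SL_n(\mathbb{Z})$-class of $T$'s yields only finitely many $SL_{n-1}(\mathbb{Z})$-classes of $T_0^\ast$ in the first place. The block-diagonal substitutions you invoke embed $SL_{n-1}$ into $SL_n$, not the other way around. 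The missing ingredient is either (i) that a positive definite $T$ represents the integer $t$ only finitely many times, bounding the choices of last column of $U\in SL_n(\mathbb{Z})$ for which $T[U]$ still has bottom-right entry $t$; or equivalently (ii) the determinant route: $\det T_0^\ast=\det(T)/t$ takes finitely many values, and reduction theory for positive definite forms then gives finitely many $GL_{n-1}(\mathbb{Z})$-classes with each determinant. With either of these inserted your proof goes through; note that (ii) is exactly the paper's mechanism, and in degree $1$ it needs no reduction theory at all, which is what makes the paper's direct reduction cleaner than the induction.
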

\begin{proof}[Proof of Proposition \ref{Prop4}] 
We recall the well-known fact \cite{Frei, Kli, Zie} that 
$f$ has a Fourier-Jacobi expansion of the form
\[f(Z)=\sum _{M\in \Lambda _{n-1}}\varphi _M(\tau , \frak{z})e^{2 \pi i {\rm tr}(M\cdot \tau ')}.\] 
Here, we decomposed $\mathbb{H}_n\ni Z=\left( \begin{smallmatrix} \tau & {}^t \frak{z} \\ \frak{z} & \tau ' \end{smallmatrix} \right) $ for $\tau \in \mathbb{H}_1$ and $\tau '\in \mathbb{H}_{n-1}$.  
Now we pick up the $M_0$-th Fourier-Jacobi coefficient for any $M_0\in \Lambda _{n-1}^+$ and consider its theta expansion; 
\[ \varphi _{M_0}(\tau ,\frak{z})=\sum _{ \mu } h_\mu (\tau )\Theta _{M_0}[\mu ](\tau,\frak{z}) \]
with 
\begin{align*}
h_\mu (\tau )=\sum _{l=0}^\infty a_f\left( \begin{smallmatrix} l & \frac{\mu}{2} \\ \frac{{}^t \mu}{2} & M_0 \end{smallmatrix}\right)e^{2\pi i (l-\frac{1}{4}M_0^{-1}[\mu ])\tau }. 
\end{align*}
Note that $h_{\mu }$ is a modular form of weight $k-\frac{n-1}{2}$ 
for $\Gamma ^{(1)}(4NL)$, where $L$ is the level of $M_0$. 

Now $a_f(T)\not \equiv 0$ mod $\frak{p}^m$ only for finitely many inequivalent $T\in \Lambda _n^+$ (and hence only finitely many determinants) means for the Fourier coefficients of $h_{\mu}$ that at most only finitely many of them are not congruent to zero modulo $\frak{p}^m$. 

This follows from 
\[\det (T)=\left( l-\frac{1}{4}M_0^{-1}[\mu ] \right)\det (M_0) \]
if $T=\left(\begin{smallmatrix} l & \frac{\mu}{2} \\ \frac{{}^t\mu}{2} & M_0 \end{smallmatrix}\right)$. 
By the result on degree $1$ (Proposition \ref{Prop}), $h_{\mu }\equiv c$ mod $\frak{p}^m$ for some $c\in {\mathcal O}_{\frak{p}}$. Therefore only the Fourier coefficients with $\det (T)=0$ are 
possibly not congruent to zero modulo $\frak{p}^m$.  
\end{proof}
Using Proposition \ref{Prop4}, we can prove Theorem \ref{ThmM2} for 
$\Gamma _1^{(n)}(N)$:
\begin{proof}[Proof of Theorem \ref{ThmM2}]
We consider $f\in M_k(\Gamma _1^{(n)}(N))$ with $k\in \mathbb{Z}$. 

We may show that, if there exists $S_i$ with ${\rm rank}(S_i)=r \le n$ 
such that $a_f(S_i)\not \equiv 0$ mod $\frak{p}^m$, then there 
exist infinitely many inequivalent classes of quadratic forms 
in $\Lambda _{n}$ with rank $r$ such that corresponding 
Fourier coefficients are not congruent to zero modulo 
$\frak{p}^m$. For the Siegel $\Phi $-operator, 
we consider $\Phi ^{n-r}(f)\in 
M_k(\Gamma _1^{(r)}(N))_{{\mathcal O}_{\frak{p}}}$. 
By the assumption on the rank of $S_i$, there exists 
$U\in SL_n(\mathbb{Z})$ such that 
\begin{equation}S_i[U]=\left(\begin{smallmatrix}0 & 0 \\ 0 & T_i
\end{smallmatrix}\right) \label{problem}
\end{equation} for some $T_i\in \Lambda _r^+$. 
Then $a_{\Phi ^{n-r}(f)}(T_i) =a_f(S_i)\not \equiv 0$ 
mod $\frak{p}^m$. 
Applying Proposition \ref{Prop4} to $\Phi ^{n-r}(f)$, 
there are infinitely many inequivalent 
$T\in \Lambda _r^+$ such that $a_{\Phi ^{n-r}(f)}(T)
\not \equiv 0$ mod $\frak{p}^m$. For each $T$, 
we have $a_f\left( \begin{smallmatrix} 0 & 0 \\ 0 & T 
\end{smallmatrix}\right)\not \equiv 0$ mod $\frak{p}^m$. 
Hence the claim follows.

To get the general case $\Gamma=\Gamma^{(n)}(N)$, we need some minor technical
modifications in the proof above. First of all, the Fourier-Jacobi expansion
is somewhat more complicated, because (in the notation of (\ref{jacobiproblem}))
the elements $a_3$ will all be divisible by $N$.
Secondly, the matrix $U$ used in (\ref{problem}) does not necessarily
come from an element $\left(\begin{smallmatrix} U & 0\\
0 & {}^tU^{-1}\end{smallmatrix}\right)\in \Gamma^{(n)}(N)$. We have to
use a modified $\Phi$-operator (in another cusp). Up to these modifications,
the proof above works for the principal congruence subgroup as well.\\
As for half-integral weights, an inspection of the proof above shows that 
it works in the same way for half-integral weights. Note that
one should not go to a power of $f$ (the
finiteness condition gets lost for $n>1$ (!)). 
\end{proof}

\subsection{Proof of Theorem \ref{ThmJ} }
The proof is quite similar to the one for (the degree $n$ case of) Theorem \ref{ThmM2}:

For a given $\mu$, the Fourier 
coefficients of $h_{\mu}$ are indexed by 
$T-\frac{1}{4}S^{-1}[\mu]$,
assigning to it the value of $c(T,\mu)$; this is best described by 
the ``Jacobi-coordinates'' (with $R=\mu$)
$$ (T,R)\leftrightarrow 
\begin{pmatrix} T & \frac{R}{2}\\\frac{{}^tR}{2} & S\end{pmatrix}=
\begin{pmatrix} T-\frac{1}{4}S^{-1}[R] & 0\\
0 & S\end{pmatrix} \left[ \begin{pmatrix} 1 & 0\\
\frac{1}{2}S^{-1}R & 1\end{pmatrix} \right]$$

First we remark, that for a fixed positive integer $d$ there are
only finitely many $J$-equivalence classes of pairs $(T,R)$ 
with $\det(\left(\begin{smallmatrix} T & \frac{R}{2}\\ 
\frac{{}^tR}{2} & S\end{smallmatrix}\right))=d$;
therefore, the condition of the theorem implies that there exists a finite set ${\mathcal D}$ of determinants $d$
such that 
$c(T,R)\equiv 0\bmod {\mathfrak p}^m$ holds for all $(T,R)$ with
$\det (\left( \begin{smallmatrix} T & \frac{\mu}{2}\\
\frac{{}^t \mu }{2} & S\end{smallmatrix}\right))\notin {\mathcal D}.$ 
In view of $\det(T-\frac{1}{4}S^{-1}[\mu])
\det(S)=\det (\left(\begin{smallmatrix} T & \frac{\mu}{2}\\
\frac{{}^t\mu}{2} & S\end{smallmatrix} \right))$, this means that for all but
finitely many equivalence classes of the indices $T-\frac{1}{4}S^{-1}[\mu]$
the Fourier coefficients of $h_{\mu}$ are congruent zero modulo 
${\mathfrak p}^m$. Then, by Theorem \ref{ThmM2}, the $h_{\mu}$ is a constant modulo 
${\mathfrak p}^m$. On the other hand, $h_{\mu}$ can not have a nonzero constant
Fourier coefficient unless $\frac{1}{4}S^{-1}[\mu]$ is half-integral.
\qed

\section{Proof of Theorem \ref{ThmM3}}
We shall prove only the case of integral weight. 
The proof in this section works in the same way for the 
case of half-integral weight.  

We apply the following general congruence due to Rasmussen \cite{Ra} (Theorem 2.16) of Serre \cite{Se} and Katz \cite{Kat} (see also \cite{Bo-Na2}):\\
\textit{ Let $\frak{p}$ be a prime ideal with $\frak{p} | p$ for an odd prime $p$. If $f_i\in M_{k_i}(\Gamma _1^{(1)}(N))_{{\mathcal O}_\frak{p}}$ with $f_1 \equiv f_2$ mod $\frak{p}^m$ ($f_i\not \equiv 0 \bmod{\frak{p}}$) and $N$ is coprime to $p$, then $k_1\equiv k_2$ mod $(p-1)p^{\beta(m)}$.} \\

Variant (*): \textit{Let $\frak{p}$ be a prime ideal with 
$\frak{p}|p$ for an odd prime $p$.  
If $f_i\in M_{k_i}(\Gamma _0^{(1)}(Np^t),\psi )_{{\mathcal O}_{\frak{p}}}$ 
with $f_1 \equiv f_2 \bmod{\frak{p}^m}$ 
($f_i\not \equiv 0 \bmod{\frak{p}}$) and 
$N$ is coprime to $p$ as long as the conductor of 
$\psi ^2$ is coprime to $p$, then $k_1\equiv k_2$ mod 
$ \frac{p-1}{2^{\delta}}p^{\beta(m)}$, where 
$\delta$ is zero (one respectively),
if the conductor of $\psi$ is coprime to $p$ 
($p$ divides the conductor respectively). }\\  

The reason is that (by Serre \cite{Se, Se2} or in the Siegel modular forms 
setting by \cite{Bo-Na2}) any such 
$f_i$ is congruent modulo ${\mathfrak p}^m$ to a modular form $g_i$ of weight 
$k_i'$ for $\Gamma _0^{(1)}(N)$ and Nebentypus $\psi '$ with conductor of $\psi '$ 
coprime to $p$; at the same time the weights $k_i'$ and $k_i$ are congruent
modulo $\frac{p-1}{2^{\delta}}p^{\beta (m)}$.

Assume that $v _\frak{p}^{(n'+1)}(f)=v_\frak{p}^{(n')}(f)+m$. 
Taking a suitable constant multiple of $f$, we may assume 
that $v_\frak{p}^{(n')}(f)=0$, i.e. $m=v _\frak{p}^{(n'+1)}(f)
>v_\frak{p}^{(n')}(f)=0$. Now we consider 
$g:=\Phi ^{n-n'-1}(f)\in M_k(\Gamma _1^{(n'+1)}(N),\psi )_{K}$. 
This means that there exists a matrix $S_0\in \Lambda _{n'+1}$ with 
${\rm rank}(S_0)=n'$ such that $a_g(S_0)\not \equiv 0$ mod 
$\frak{p}$ and $a_g(S)\equiv 0$ mod $\frak{p}^m$ for all 
$S\in \Lambda _{n'+1}^+$. We may assume that $S_0$ is of the form
$S_0= \left(\begin{smallmatrix} 0 & 0\\
0 & T\end{smallmatrix}\right)$ for some $T\in \Lambda^+_{n'}$.
Then the Fourier-Jacobi coefficient of $g$ at the index $T$
\[\varphi _{S_0}(\tau, \frak{z})=\sum _\mu h_\mu 
(\tau)\Theta_{T} [\mu ](\tau, \frak{z})\]
satisfies 
\[h_0\equiv c\bmod{\frak{p}^m}\]
for some $c\in {\mathcal O}_{\frak{p}}^{\times}$. This $h_0$ is a 
modular form of weight $k-\frac{n'}{2}$ and level $L:=\text{level}(2T)$. 
We have to analyze $h_0$ more carefully: Its properties come from 
those of the theta series $\Theta _T[0](\tau ,\frak{z})$ and from $f$. 
Now we can show that 
\begin{Lem}
\label{Lem7}
We have $h_0^2\in M_{2k-n'}(\Gamma _0^{(1)}(NL),\psi ^2 \chi^{n'}_{-4})$, 
where $\chi _{-4}=\left(\frac{-4}{*}\right)$.   
\end{Lem}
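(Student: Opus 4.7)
The strategy is to derive the automorphy of $h_0$ by combining the Jacobi transformation of $\varphi_{S_0}$ with the classical transformation of the Jacobi theta functions $\Theta_T[\mu]$, and then square. I first fix the automorphy of $\varphi_{S_0}$. Since $g\in M_k(\Gamma_1^{(n'+1)}(N),\psi)$ and $S_0=\left(\begin{smallmatrix} 0 & 0\\ 0 & T\end{smallmatrix}\right)$, the Fourier--Jacobi coefficient $\varphi_{S_0}(\tau,\mathfrak{z})$ taken along the leading $\mathbb{H}_1$-factor is a Jacobi form of weight $k$, index $T$, and Nebentypus $\psi$ for the Jacobi group attached to $\Gamma_0^{(1)}(N)$. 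Hence for $\gamma=\left(\begin{smallmatrix} a & b\\ c & d\end{smallmatrix}\right)\in\Gamma_0^{(1)}(N)$ one has
\[
\varphi_{S_0}\!\left(\tfrac{a\tau+b}{c\tau+d},\tfrac{\mathfrak{z}}{c\tau+d}\right)=\psi(d)(c\tau+d)^k e^{2\pi i\,c\,T[\mathfrak{z}]/(c\tau+d)}\varphi_{S_0}(\tau,\mathfrak{z}).
\]

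Next I would invoke the classical transformation of the Jacobi theta functions: for $\gamma$ in a sufficiently deep congruence subgroup (contained in $\Gamma_0^{(1)}(NL)$, intersected with $\Gamma_0(4)$ when $n'$ is odd), the action on the index set $\mu$ is trivial, and in particular $\Theta_T[0]$ transforms by the scalar $\chi_T(d)\,j(\gamma,\tau)^{n'}\,e^{2\pi i\,c\,T[\mathfrak{z}]/(c\tau+d)}$, where $\chi_T$ is the quadratic character of the theta series of the positive-definite form $T$ and $j(\gamma,\tau)$ is Shimura's half-integral-weight automorphy factor. Substituting the theta decomposition $\varphi_{S_0}=\sum_\mu h_\mu\Theta_T[\mu]$ into the Jacobi transformation and reading off the $\mu=0$ component (the Jacobi exponential factors cancel), I obtain
\[
h_0\!\left(\tfrac{a\tau+b}{c\tau+d}\right)=\psi(d)\chi_T(d)^{-1}j(\gamma,\tau)^{2k-n'}h_0(\tau),
\]
which expresses the desired weight $k-n'/2$ automorphy with a definite multiplier system.

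Squaring this identity and using the standard relation $j(\gamma,\tau)^2=\chi_{-4}(d)(c\tau+d)$ on $\Gamma_0(4)$ gives
\[
h_0^2\!\left(\tfrac{a\tau+b}{c\tau+d}\right)=\psi(d)^2\chi_T(d)^{-2}\chi_{-4}(d)^{2k-n'}(c\tau+d)^{2k-n'}h_0^2(\tau).
\]
Since $\chi_T$ is a quadratic character we have $\chi_T^{-2}=1$, and since $\chi_{-4}$ has order $2$ we have $\chi_{-4}^{2k-n'}=\chi_{-4}^{n'}$. This yields $h_0^2\in M_{2k-n'}(\Gamma_0^{(1)}(NL),\psi^2\chi_{-4}^{n'})$, as claimed.

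The main obstacle will be the careful bookkeeping of the automorphy factors: one has to verify that the Jacobi exponential produced by the transformation of $\varphi_{S_0}$ cancels exactly against the corresponding factor in the theta transformation of $\Theta_T[0]$, and that the theta character $\chi_T$ of $T$ is genuinely quadratic (so that $\chi_T^2=1$). Holomorphy at cusps and integrality of Fourier coefficients are automatic from the corresponding properties of $\varphi_{S_0}$ and $\Theta_T[\mu]$ once the transformation law above is established.
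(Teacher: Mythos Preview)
Your approach is essentially the paper's: both deduce the automorphy of $h_0$ by playing the Jacobi transformation of $\varphi_{S_0}$ against the theta transformation and isolating the $\mu=0$ component. The paper packages the isolation step slightly differently---it observes that $\Theta_T[0]$ is itself a Jacobi form of weight $\tfrac{n'}{2}$ for $\Gamma_0^{(1)}(L)$ (so the first \emph{row} of the unitary matrix $U(\gamma)$ has a single nonzero entry) and then invokes unitarity of $U(\gamma)$ to get the same shape for the first \emph{column}, which is what lets one read off $h_0$; your explicit derivation of the character $\psi^{2}\chi_{-4}^{n'}$ from $j(\gamma,\tau)^{2}=\chi_{-4}(d)(c\tau+d)$ is a detail the paper leaves implicit.

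One imprecision to fix: on $\Gamma_0^{(1)}(L)$ the Weil representation is not diagonal---the action on the index set is $\mu\mapsto a\mu$ (a nontrivial permutation in general), so ``the action on the index set $\mu$ is trivial'' is not correct at level $NL$. What you actually need (and what the paper's unitarity argument supplies) is only that $\mu=0$ is a fixed point of this permutation and that no other $\Theta_T[\nu]$ contributes to the $\Theta_T[0]$-coefficient after transformation; once you state it this way, your ``reading off the $\mu=0$ component'' is justified on $\Gamma_0^{(1)}(NL)$ itself, and the level in the conclusion is the one claimed.
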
   
If we can prove this, then we can apply Variant (*). Hence we get 
$2k-n'\equiv 0$ mod $(p-1)p^{\beta(m)}$. 
Therefore, it suffices to prove Lemma \ref{Lem7}. 
\begin{proof}[Proof of Lemma \ref{Lem7}]
We write 
\[ \varphi_{T}(\tau,\frak{z})e^{2\pi i{\rm tr}(T\cdot \tau ')}={}^t{\boldsymbol h} \cdot {\boldsymbol \Theta}, \]
where ${\boldsymbol h}$ and ${\boldsymbol \Theta}$ are column vectors 
indexed by $\mu $ and with entries
${}^t {\boldsymbol h}=(h_0,\dots h_{\mu},\dots ) $ 
and
$${}^t {\boldsymbol \Theta}(Z)= (\dots , \theta_T[\mu]
(\tau,{\mathfrak z})e^{2\pi i tr(T\cdot \tau')}, \dots).
\qquad (Z\in {\mathbb H}_{n'+1})$$ 
 The general transformation theory of theta series 
tells us that for all $\gamma =\left(\begin{smallmatrix} a & b \\
c & d\end{smallmatrix}\right)
\in SL_2(\mathbb{Z})$ there exists unitary matrix $U(\gamma )$ such that 
\[{\boldsymbol \Theta }(\gamma^{\uparrow} \cdot Z)=(c\tau +d)^{\frac{n'}{2}}
U(\gamma )\cdot {\boldsymbol \Theta}. \] 
Here 
$\uparrow$ denotes the embedding of $SL_2({\mathbb Z})$ into $Sp_{n'+1}({\mathbb Z})$, given by
$$\begin{pmatrix} a & b\\ c & d\end{pmatrix}^{\uparrow}:= \begin{pmatrix}
a & 0 & b & 0\\
0 & 1_{r'} & 0 & 0{r'}\\
c & 0 & d & 0\\
0 & 0_{r'} & 0 & 1_{r'}\end{pmatrix}.$$

This implies for all $\gamma =\left(\begin{smallmatrix} a & b \\
c & d\end{smallmatrix}\right)\in \Gamma _0^{(1)}(N)$
\[ {}^t {\boldsymbol h}(\gamma\cdot\tau)\cdot U(\gamma )=\psi (d)(c\tau +d)^{k-\frac{n'}{2}}
\cdot {}^t{\boldsymbol h}(\tau). \] 

On the other hand, let $L$ be the level of $2T$ and assume that $\gamma \in \Gamma _0^{(1)}(L)$. Then the matrix $U(\gamma )$ has the form 
\[U(\gamma )=\begin{pmatrix} * & 0 & \cdots & 0 \\ * & * & \cdots & * \\ \vdots & \vdots &  & \vdots \\ * & * & \cdots & * \end{pmatrix} \]
because the ``first component'' (indexed by $\mu =0$) is itself a Jacobi 
form of weight $\frac{n'}{2}$ for $\Gamma _0^{(1)}(L)$. The matrix $U(\gamma )$ being unitary, This implies that 
\[U(\gamma )=\begin{pmatrix} * & 0 & \cdots & 0 \\ 0 & * & \cdots & * \\ \vdots & \vdots &  & \vdots \\ 0 & * & \cdots & * \end{pmatrix}, \]
hence $h_0$ is a modular form for the group $\Gamma _0^{(1)}(N)
\cap \Gamma _0^{(1)}(L)$.    
\end{proof}

\begin{Rem} Even in the case, where $n'$ is even, we cannot in general
expect the sharper congruence $k-\frac{n'}{2}\equiv 0 \bmod (p-1)p^{\beta(m)}$.
The reason is that we cannot assure that the level of $T$ is coprime to $p$;
it may bring in a quadratic character modulo $p$ and then we only have the 
weaker congruence of the theorem.
\end{Rem} 

\section{Examples}
\label{Ex}
\subsection{Trivial examples}
\label{tri}
Of course singular modular forms in the usual sense are then 
also mod $p$ singular modular forms. The rank $r$ of such a singular 
modular form
does not necessarily coincide with the $p$-rank $r_p$:
For example, let $F^n$ be the (unique) modular form 
of degree $n$, level one, weight $4$
with constant Fourier coefficient equal to $1$. 
This modular form exists in all degrees
and can be constructed as theta series for the $E_8$-lattice or 
as Eisenstein series
(possibly after analytic continuation). 
Its Fourier coefficients are all in ${\mathbb Z}$.
Now take $n$ large enough ($n>8$), then
$F^n$ is a singular modular form of rank $r=8$. 
It is then also mod $p$ singular.
Theorem \ref{ThmM3} implies that $2k-r_p=8-r_p$ is divisible by $p-1$, hence
$r_p=8$ for all $p\geq 11$, $r_7\in 
\{2,8\}$, $r_5\in \{0, 4, 8\}$ and $r_3\in\{0,2,4,6,8\}$.
More precisely, one can show that $r_5=0$ (for any prime $p\equiv 1\bmod 4$
there exists in any degree a modular form congruent $1 \bmod p$ of 
weight $p-1$,
see \cite{Bo-Na}); moreover, the lattice $E_8$ has an automorphism of
order $7$; then $r_7=2$ follows from (\ref{automorphism}).  


More generally, taking any $g\in M_{p-1}(\Gamma _n)_{\mathbb{Z}_{(p)}}$ 
with $g\equiv 1$ mod $p$ for an odd prime $p$ 
(by  \cite{Bo-Na} the existence of such forms 
is assured provided that $p>n+3$), 
we may consider $g \cdot F^n\in M_{p+3}(\Gamma _n)$. This is no longer a 
singular modular form in the usual sense. 
However, this is still a mod $p$ singular modular form 
because $g \cdot F^{n} \equiv F^n$ mod $p$. 

In this way, one can create 
mod $p$ singular modular form, which are
 nonsingular in the usual sense. Examples of this type
should be considered trivial. In the next two sections, we construct some 
examples of non-trivial mod $p$ singular modular forms in two ways.


\subsection{Construction using lattices with automorphisms}

For general properties of lattices with automorphisms we refer to
\cite{BayFlu}.\\
Let us start from an
even lattice $L$ of rank $m=2k$ with associated 
Gram matrix $S$ and assume that  
$L$ has an automorphism $\sigma$ of order $p$.

For all $T\in \Lambda^+_m$ the representation number
$$A(S,T) := \{X\in {\mathbb Z}^{(m,m)}\,\mid \, {}^t XSX=T\}$$
is divisible by $p$, because $\sigma$ acts without fixed point on this set. 
Therefore the theta series
$$\theta^m_S(Z):= \sum_{X\in {\mathbb Z}^{(m,m)}}e^{2\pi i {\rm tr}({}^tX SXZ)} \in M_k(\Gamma _m) $$ 
(modular form of weight $k$ and degree $m$) is mod $p$ singular of  
rank $r_p<m$. More precisely, let
$n_0$ be the largest $n$ such that there exists $T\in \Lambda_n^+$ with 
$A(S,T)$ not divisible by $p$.
Then $\theta^m_S$ is mod $p$ singular of $p$-rank $r_p=n_0$ and 
$m -n_0$ is divisible by $p-1$. \\

If this congruence for the representation numbers is not 
``very accidental'',
we may guess that $n_0+1$ is at the same time the smallest number $t$ such that 
$\sigma$ acts without fixed points on
$$\{X\in {\mathbb Z}^{(m,t)}\,\mid \, {\rm rank}(X)=t\}.$$

What can we say about this number $t$ from the point of view of 
lattices ?



The minimal polynomial $\mu_{\sigma}$ of $\sigma$ must be a divisor of 
$X^p-1=(X-1)\cdot \Phi_p$, where $\Phi_p$ is 
the cyclotomic polynomial $X^{p-1}+\dots +X+1$. 
In particular, the characteristic polynomial $\chi_{\sigma}$ of $\sigma$
is then of the form
$$\chi_{\sigma}= (X-1)^{\alpha} \cdot \Phi_p^{\beta}$$
with nonnegative integers $\alpha$, $\beta$. Counting degrees, this implies

\begin{equation} m=2k= \alpha+ \beta\cdot (p-1).\label{automorphism}
\end{equation}
The number $\alpha$ is then equal to  $t-1$ from above.
We cannot prove that $t-1$ and $n_0$ are the 
same, but at least both are congruent to 
$m=2k$ mod $p-1$. We do not know examples, were these
numbers do not coincide 
(i.e. where the congruence satisfied by the theta series
is really ``very accidental''.


Interesting examples of such theta series 
should come from (even unimodular) lattices $L$, 
for which the genus of $L$ contains more than one isometry class.
Otherwise, by Siegel's theorem, the properties are the same 
as those of Eisenstein series (see below).








\subsection{Construction by the Siegel-Eisenstein series}
\label{Eisen}
We denote by 
\[ \Gamma _\infty ^{(n)}:=\left\{ \begin{pmatrix}A & B \\ C & D \end{pmatrix}\in \Gamma _n \,\Big{|} \, C=0_n \right \}. \]
the Siegel parabolic subgroup.\\
For an even integer $k> n+1$, the Siegel-Eisenstein series $E_k^{(n)}\in M_k(\Gamma _n)$ is defined by the series 
\[E_k^{(n)}(Z):= \sum _{\left( \begin{smallmatrix} * & * \\ C & D \end{smallmatrix} \right) \in \Gamma _\infty ^{(n)} \setminus \Gamma _n} \det (CZ+D)^{-k}, \quad Z\in \mathbb{H}_n. \]
 
Then we have
\begin{Thm}
\label{ThmEx}
For an odd positive integer $n\ge 3$, we take a prime $p$ and an even positive integer $k>n+1$ such that 
$2k-n+1\equiv 0$ mod $p-1$ and 
\begin{align*} 
v_p \left( \frac{k}{B_k}\right)=v_p\left( \frac{k-i}{B_{2k-2i}} \right)=0 \quad for\ all\ 1\le i\le \frac{n-3}{2}. 
\end{align*} 
Then $E_k^{(n)}$ is a mod $p$ singular modular form with $p$-rank $n-1$. 
\end{Thm}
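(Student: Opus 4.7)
The plan is to exploit the classical explicit Fourier-coefficient formula for Siegel--Eisenstein series (Siegel, Shimura, Katsurada). For $T\in\Lambda_n^+$ with $n$ odd it reads
$$a_{E_k^{(n)}}(T)=\alpha_p(T)\cdot\prod_{i=0}^{(n-1)/2}\zeta(1-2k+2i)^{-1}\cdot(\text{$p$-integral constant}),$$
where $\alpha_p(T)$ is the local representation density at $p$, itself a $p$-integer. Applying $\zeta(1-m)=-B_m/m$, each inverse zeta factor becomes one of the Bernoulli quotients $k/B_k$ or $(2k-2i)/B_{2k-2i}$ that appear in the hypothesis.

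First I would show that every rank-$n$ coefficient of $E_k^{(n)}$ is divisible by $p$. The hypothesis $v_p(k/B_k)=v_p((k-i)/B_{2k-2i})=0$ for $1\le i\le (n-3)/2$ makes the factors at $i=0,1,\dots,(n-3)/2$ into $p$-adic units (note $v_p(2)=0$ since $p$ is odd). The last factor, at $i=(n-1)/2$, is $(2k-n+1)/B_{2k-n+1}$. Because $k$ is even and $n$ odd, $2k-n+1$ is a positive even integer, and the assumption $(p-1)\mid 2k-n+1$ triggers the von Staudt--Clausen theorem, giving $v_p(B_{2k-n+1})=-1$ and hence $v_p\bigl((2k-n+1)/B_{2k-n+1}\bigr)\ge 1$. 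Multiplied by the $p$-integral $\alpha_p(T)$ this forces $v_p\bigl(a_{E_k^{(n)}}(T)\bigr)\ge 1$ for every $T\in\Lambda_n^+$.

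Next I would exhibit a rank-$(n-1)$ coefficient that is a $p$-adic unit. Using $\Phi(E_k^{(n)})=E_k^{(n-1)}$, the rank-$(n-1)$ coefficients of $E_k^{(n)}$ at block-diagonal matrices $\mathrm{diag}(0,T')$ with $T'\in\Lambda_{n-1}^+$ coincide with the top-rank coefficients of $E_k^{(n-1)}$. Here $m:=n-1$ is \emph{even}, so the analogous formula involves the zeta factors only for $i=0,1,\dots,(n-3)/2$---precisely those turned into $p$-units by the hypothesis---together with an extra special $L$-value $L(k-m/2,\chi_{T'})$, where $\chi_{T'}$ is the Kronecker character attached to $(-1)^{m/2}\det(2T')$. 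Choosing $T'$ with discriminant coprime to $p$ (e.g.\ a suitably scaled identity) keeps $\beta_p(T')$ a $p$-unit and reduces the $L$-value to a generalized Bernoulli number $B_{k-(n-1)/2,\,\chi_{T'}}$, which is a $p$-unit provided the weight $k-(n-1)/2$ avoids the forbidden residue class $0\bmod(p-1)$---a check one can carry out directly from the parity of $k$ and the congruence $(p-1)\mid 2k-n+1$.

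The technical heart, and the most likely obstacle, is the simultaneous $p$-unit control of the local density and the special $L$-value in the last step. A more robust, indirect route would invoke Corollary~\ref{Cor3}: were all rank-$(n-1)$ coefficients of $E_k^{(n)}$ divisible by $p$, its $p$-rank would drop to some even $r<n-1$ with $(p-1)\mid 2k-r$, hence $(p-1)\mid n-1-r$; iterating along the tower $\Phi^{n-j}(E_k^{(n)})=E_k^{(j)}$, together with the fact that the constant term $a(0)=1$ is a $p$-unit (so each $E_k^{(j)}$ is nonzero mod $p$), eventually forces a contradiction with the integrality hypotheses on $k/B_k$ and $(k-i)/B_{2k-2i}$.
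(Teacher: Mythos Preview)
Your first half --- showing that every rank-$n$ Fourier coefficient is divisible by $p$ --- is essentially the paper's argument: the product of Bernoulli quotients picks up the factor $(k-\frac{n-1}{2})/B_{2k-n+1}$, and von Staudt--Clausen together with $(p-1)\mid 2k-n+1$ gives $v_p\ge 1$.

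The second half, however, has a real gap. For the \emph{even} rank $m=n-1$ the Siegel formula (or equivalently the constant $c_{k,n-1}$ in B\"ocherer's integrality result) does \emph{not} stop at $i=(n-3)/2$: the product of inverse zeta values runs up to $i=m/2=(n-1)/2$, so the ``bad'' factor $1/\zeta(1-(2k-n+1))$ is still present. Whether it is compensated by the auxiliary factor $D_{2k-n+1}^{*}$ or $D_{2k-n+1}^{**}$ depends on $n-1\bmod 4$ and on $p\bmod 4$, and in the $D^{**}$ case with $p\equiv 1\bmod 4$ there is no cancellation at all. So your claim that ``the analogous formula involves the zeta factors only for $i=0,\dots,(n-3)/2$'' is incorrect, and controlling the $L$-value and local density simultaneously is genuinely delicate here --- which you yourself flag. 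Your fallback via Corollary~\ref{Cor3} points in the right direction but, as written, the ``iteration down the tower'' does not produce a contradiction: knowing only that $a(0)=1$ is a unit does not rule out $p$-rank $0$ or $n-1-(p-1)$, etc.

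The paper sidesteps all of this by looking one step lower, at the \emph{odd} rank $n-2$. There the constant $c_{k,n-2}=2^{n-2}\frac{k}{B_k}\prod_{i=1}^{(n-3)/2}\frac{k-i}{B_{2k-2i}}$ involves exactly the quotients controlled by the hypothesis and no $L$-value, so $v_p(c_{k,n-2})=0$. Combined with the fact (from \cite{Bo}) that for odd $r$ the integers $c_k^{(r)}(T)$ have $\gcd$ equal to $1$ over $T\in\Lambda_r^+$, one gets a rank-$(n-2)$ coefficient that is a $p$-unit. Hence the $p$-rank is $n-2$ or $n-1$; but Corollary~\ref{Cor3} forces the $p$-rank to be even (since $k\in\mathbb Z$), and $n-2$ is odd, so the $p$-rank is $n-1$. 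The trick you are missing is precisely this drop to odd rank $n-2$ plus the $\gcd$-one input, which makes the Corollary do the last step for free.
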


\begin{proof}
We write the Fourier expansion of $E^{(n)}_k$ as 
\[E_k^{(n)}=\sum _{0\le T\in \Lambda _n}a_k^{(n)}(T)e^{2 \pi i {\rm tr}(TZ)}. \]

Let $D^*_{m}$, $D_{m}^{**}$ be two natural numbers defined as 
\begin{align*}
D_{m}^*&:=\prod _{p|D_{m}}p^{1+v_p(k-\frac{m}{2})}\quad for\ m\equiv 0 \bmod{4},\\
D_{m}^{**}&:=\prod _{\substack{p|D_{m} \\ p\equiv3 \bmod{4}}}p^{1+v_p(k-\frac{m}{2})}\quad for\ m\equiv 2 \bmod{4},
\end{align*}
where $D_m$ is the denominator of the $m$-th Bernoulli number $B_{m}$. 

For an integer $r$ ($1\le r \le n$), put  
\begin{align*}
c_{k,r}:=
\begin{cases} 
\displaystyle 2^r \cdot \frac{k}{B_k} \cdot \prod_{i=1}^{\frac{r-1}{2}}\frac{k-i}{B_{2k-2i}}\ (r:\ odd) \\
\displaystyle 2^r \cdot \frac{k}{B_k}\cdot \frac{1}{D_{2k-r}^*}\cdot \prod_{i=1}^{\frac{r}{2}}\frac{k-i}{B_{2k-2i}}\ (r\equiv 0\bmod{4}), \\
\displaystyle 2^{r-1}\cdot \frac{k}{B_k}\cdot \frac{1}{D^{**}_{2k-r}}\cdot \prod_{i=1}^{\frac{r}{2}}\frac{k-i}{B_{2k-2i}}\ (r\equiv 2 \bmod{4}).
\end{cases}
\end{align*}
Then the results of \cite{Bo} assert that $a_{k}^{(r)}(T)\in c_{k,r} \mathbb{Z}$. 
Hence, we can write as $a_{k}^{(r)}(T)= c_{k,r} \cdot c_k^{(r)}(T)$ for 
some $c^{(r)}_k(T)\in \mathbb{Z}$. Note that $c_{k,r}\in \mathbb{Z}_{(p)}$ in this situation.    

By the choice of $p$, we have
\[v_p \left( \frac{k-\frac{n-1}{2}}{B_{2k-n+1}} \right) =1, \quad v_p \left( \frac{k}{B_k}\cdot \prod_{i=1}^{\frac{n-3}{2}}
\frac{k-i}{B_{2k-2i}}\right)=0,  \]
and hence 
\begin{align*}
a_{k}^{(n)}(T)&=2^n \cdot \left( \frac{k}{B_k}\cdot 
\prod_{i=1}^{\frac{n-3}{2}}\frac{k-i}{B_{2k-2i}}\right)
\cdot \frac{k-\frac{n-1}{2}}{B_{2k-n+1}}\cdot c^{(n)}_k(T) \equiv 0 \bmod{p}.
\end{align*}
On the other hand, the rank $n-2$ part is not zero modulo $p$: 
By our assumptions, $c_{k,n-2}$ is not divisible by $p$. Furthermore, in 
\cite[p.285]{Bo} it 
was shown that the gcd of all the $c_{k}^{(r)}(T)$ with $T\in \Lambda_r^+$ is one for all odd $r$.
We apply this to $r=n-2$ and obtain that $E^{(n)}_k$ is mod $p$ 
singular of $p$-rank $n-2$ or of $p$-rank $n-1$. The congruence of Corollary \ref{Cor3} 
shows that the $p$-rank is indeed $n-1$. This completes the proof of Theorem \ref{ThmEx}.
   
\end{proof}

\begin{Rem} Results of Weissauer \cite{Weiss} (see also Haruki \cite{Haruki} and Shimura \cite{Shi}) 
show that by 
``Hecke summation'' we can define holomorphic Eisenstein series also for small even weights $k$:
If $4\mid k$ this is true for all weights, for $k\equiv 2 \bmod 4$ it is true for
$k>\frac{n+3}{2}$ (with the same kind of Fourier expansion as in the case of large weights). 
One may then extend the result of Theorem \ref{ThmEx} above to $k\geq \frac{n}{2}$ if $4\mid k$
and for $k\equiv 2\bmod 4$ it is true for $k>\frac{n+3}{2}$.
\end{Rem}

\begin{Ex}
(Case ${n=3}$) Let $k=4$. Then $2k-n+1=6$ and hence $2k-n+1\equiv 0$ mod $p-1$ holds if $p=3$ or $7$. 
Among these primes, $p=7$ does not divide $\frac{4}{B_4}
=4\cdot (- 30)=-2^3\cdot 3\cdot 5$. 
Therefore $E^{(3)}_4$ is a mod $7$ singular modular form (see also 
subsection 6.1). 

Elsewhere, there are $(k,p)=(6,11)$, $(10,19)$ and so on. \\
(Case ${n=5}$) Let $k=6$. Then $2k-n+1=8$ and hence $2k-n+1\equiv 0$ mod $p-1$ holds if $p=3$ or $5$. In these primes, we may chose $p$ such that $p$ does not divide both $\frac{6}{B_6}=6\cdot 42=2^2\cdot 3^2 \cdot 7$ and $\frac{5}{B_{10}}=66=2\cdot 3\cdot 11$. Namely $p=5$. Therefore $E^{(5)}_6$ is a mod $5$ singular modular form.  

Elsewhere, there are $(k,p)=(8,7)$, $(8,13)$, $(10,5)$, $(10,17)$ and so on.
\end{Ex}


\subsection{Eisenstein series $E^{(n)}_{p-1}$  with  irregular prime ${\boldsymbol p}$}

We cannot expect a Siegel-Eisenstein series $E^{(n)}_{p-1}$ to be congruent 
$1 \bmod p$ for all $n$ if the prime $p$ is irregular, see \cite{Nag}.
On the other hand, the $p$-adic behavior of Bernoulli numbers seems not to 
allow for a general description of $E^{(n)}_{p-1} \bmod p$. We look instead in 
detail at the case of the smallest irregular prime, i.e. $p=37$.
Here the divisibility changes several times, which is quite remarkable.
Note that (by the theory of Hecke summation) this Eisenstein series 
$E^{(n)}_{36}$
exists for all degrees $n$ \cite{Weiss, Haruki}.
\\
The prime factorization of numerators of Bernoulli numbers 
shows that $v_{37}(B_{2j})=0$
for all $j$ with $2\leq 2j\leq 70$ with two exceptions: 
$v_{37}(B_{32})=v_{37}(B_{68})=1$.

For any odd degree $r$ with $1\leq r\leq 2p-3=71$ we look at
the numbers $c_{36,r}$ given in subsection 6.2. When viewed in 
the direction of increasing $r$,
these numbers are divisible by 37 as long as $B_{68}$ does not appear,
i.e. it is divisible by $p$ for $r=1, 3$ but no longer for $r=5$.
It becomes then divisible again by $p=37$ when $2k-2i=72-(r-1)=p-1=36$, i.e.
for $r=37$. This means that $E^{(n)}_{36}$ is $37$-singular of rank $36$ for 
all $n\geq 37$. Increasing $r$ still further, we see that $c_{36,r}$
remains divisible by $37$ until $B_{32}$ appears as $B_{2k-(r-1)}$
in the product defining $c_{36,r}$, i.e. it is no longer divisible by $p$
from $r= 41$ on until $r=71$.
This gives for $f:= E^{(n)}_{36}$ 
with $n$ any singular degree in the usual sense
(i.e. $n\geq 73$) and $p=37$: 

$$v^{(0)}_p(f)=0$$
$$v^{(1)}_p(f)=\dots = v^{(4)}_p(f)=1$$
$$v_p^{(5)}(f)=\dots = v_p^{(36)}(f)=0$$
$$v_p^{(37)}(f)=\dots = v_p^{(40)}(f)=1$$
$$v_p^{(41)}(f)= \dots = \dots v_p^{(72)}(f)=0$$
$$v_p^{(t)}(f)=\infty \,\,\mbox{for all} \,\, t\,\,\mbox{with} 
\,\,73\leq t\leq n$$

To confirm the values $v_p^{(4)}(f)$ and $v_p^{(4)}(f)$ 
one has to observe that
$D^{**}$ contains only prime factors congruent $3$ mod $4$.
Furthermore, to confirm $v_p^{(5)}(f)$ and $v_p^{(41)}(f)$ we remark that
(following \cite[p.285]{Bo}) the gcd of all $c_{k}^{(r)}(T)$ is indeed
equal to one if $r$ is odd.

The situation may be different for other irregular primes, depending
on how many among the relevant Bernoulli numbers are divisible by $p$
(and also depending on the order of divisibility).

\section{An application to Klingen-Eisenstein series}

When we rephrase our results from 
above (Theorem 3.5) using the Siegel
$\Phi$-operator, we get for a modular form $F\in M_k(\Gamma_n)$
with Fourier coefficients in a number field $K$ and a prime ideal $\frak p$ of ${\mathcal O}_K$
\begin{equation}
v^{(n)}_{\frak p}(F)\leq v_{\frak p}^{(n-j)}(\Phi^j(F))
\label{Phi-operator}
\end{equation}
provided that none of 
$$2k-(n-j),\ 2k-(n-j+1),\ \cdots ,\ 2k-(n-1)$$
is divisible by $p-1$. This applies in particular to $F:= E_{n,r}(f)$ for a cusp form 
$f=\sum_{t\in \Lambda_r^+} c(f,t)e^{2\pi i {\rm tr}(tz)}\in M_k(\Gamma_r)$, 
where $E_{n,r}(f)$ is the degree $n$ Klingen-Eisenstein series attached to $f$,
defined by
$$E_{n,r}(f)(Z):=\sum_{
M=\left(\begin{smallmatrix} A & B\\
C & D\end{smallmatrix}\right)\in C_{n,r}({\mathbb Z})\backslash \Gamma_n} 
\det(CZ+D)^{-k} f((MZ)^*) \qquad (Z\in {\mathbb H}_n);$$
here $Z^*$ denotes the $r\times r$-submatrix in the upper left corner of 
$Z$ and $C_{n,r}(\mathbb{Z})$ is an appropriate (Klingen) parabolic subgroup of $Sp_n(\mathbb{Z})$.

One can use the explicit formulas for the Fourier coefficients of
such modular forms from \cite{Bo-2,Bo-1,Mizu1} to deduce a nice 
non-divisibility result from
(\ref{Phi-operator}) about special values of certain $L$-functions: 
To get a smooth formulation, we use
the primitive Fourier coefficients instead of the ordinary Fourier coefficients,
see \cite{Bo-Ragh}: Then the primitive Fourier coefficients of 
$E_{n,r}(f)$ are given (up
to some factor not depending on $T$ and up to a critical value of the standard $L$-function $D(f,s)$ by a special value of the Rankin convolution 
${\mathcal R}(f,\theta^r _T)$ of $f$ and
the degree $r$ theta series attached to $T$, defined by
$${\mathcal R}(f,\theta^r_T,s):=\sum_{t\in GL_n({\mathbb Z})\backslash
\Lambda_r^+} \frac{c(f,t) A(T,t)}
{\epsilon(t)\det(t)^s},$$
where $\epsilon(T)$ is the number of units of $t$. 
For the precise formula, see (\ref{primitive}) below.
We just reformulate, what (\ref{Phi-operator}) says for 
Klingen-Eisenstein series, when expressed for the (primitive) 
Fourier coefficients:

\begin{Thm}
\label{Kli}
Assume $f=\sum c(f,t)e^{2\pi i {\rm tr}(tz)}\in M_k(\Gamma_r)$ is a cuspidal 
eigenform of all Hecke operators, $k>n+r+1$. 
Let $\frak p$ be a prime ideal of ${\mathcal O}_K$, where
$K$ is a number field containing the Fourier coefficients of $f$.
For any $T\in \Lambda_n^+$ we put

\begin{equation}b(f,T):= \alpha(k,r) \cdot a^{(n)}_k(T)^*\cdot 
\frac{{\mathcal R}(f,\theta^r_T,k-\frac{r+1}{2})}{D(f,k-r)}.
\label{primitive}\end{equation}

Let $A^n(f)$ the ${\mathcal O}_K$-module generated by
all the $b(f,T)$, $T\in \Lambda_n^+$.
Then

$$v_{\frak p} (A^n(f))\leq v^{(r)}_{\frak p}(f)$$
provided that none of 
$$2k-r,\ 2k-(r+1),\ \cdots ,\ 2k-(n-1)$$
is divisible by $p-1$.
\end{Thm}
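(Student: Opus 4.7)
The strategy is to apply inequality (\ref{Phi-operator}) to $F := E_{n,r}(f)$ and then rewrite the left-hand side in terms of the $b(f,T)$. The defining property of the Klingen-Eisenstein series is $\Phi^{n-r}(E_{n,r}(f)) = f$, which holds in our convergent range $k > n+r+1$ since then the defining series converges absolutely and the Siegel $\Phi$-operator can be applied term by term. Substituting this into (\ref{Phi-operator}) with $j = n-r$ yields at once
$$v^{(n)}_{\mathfrak{p}}(E_{n,r}(f)) \leq v^{(r)}_{\mathfrak{p}}(f),$$
provided none of $2k-r,\ 2k-(r+1),\ \ldots,\ 2k-(n-1)$ is divisible by $p-1$, which is exactly the divisibility hypothesis of the theorem.

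The remaining task is to identify $v_{\mathfrak{p}}(A^n(f))$ with $v^{(n)}_{\mathfrak{p}}(E_{n,r}(f))$. By formula (\ref{primitive}) the numbers $b(f,T)$ with $T \in \Lambda_n^+$ coincide, up to a $T$-independent factor, with the primitive Fourier coefficients of $E_{n,r}(f)$ in the sense of B\"ocherer--Raghavan \cite{Bo-Ragh}; the explicit formula producing them in terms of a value of the Rankin convolution divided by a critical value of the standard $L$-function $D(f,s)$ is the content of \cite{Bo-2,Bo-1,Mizu1}. Since the ordinary Fourier coefficients at positive definite indices are $\mathcal{O}_K$-linear combinations of the primitive ones (and conversely by a M\"obius-type inversion over $GL_n(\mathbb{Z})$-equivalence classes of integral divisors of $T$), the $\mathcal{O}_K$-modules generated by $\{a_{E_{n,r}(f)}(T) \mid T \in \Lambda_n^+\}$ and by $\{b(f,T) \mid T \in \Lambda_n^+\}$ have the same $\mathfrak{p}$-adic valuation. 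Combined with the first inequality, this gives the theorem.

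The main obstacle I anticipate is not the application of (\ref{Phi-operator}), which is essentially immediate, but the bookkeeping in the second step: one must verify that the transition between ordinary and primitive Fourier coefficients at positive definite $T$ remains faithful at the level of $\mathfrak{p}$-adic valuations, and that the $T$-independent prefactors implicit in $a_k^{(n)}(T)^*$, $\alpha(k,r)$ and the denominator $D(f,k-r)$ do not affect the minimum over $T$. Granting the formal framework of \cite{Bo-Ragh}, this reduces to straightforward indexing; the substantive content of the theorem is entirely encoded in (\ref{Phi-operator}) and therefore in the weight-congruence hypotheses derived from Theorem \ref{ThmM3}.
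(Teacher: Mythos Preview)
Your proposal is correct and follows exactly the paper's own route: the theorem is stated there explicitly as ``just a reformulation'' of inequality (\ref{Phi-operator}) applied to $F=E_{n,r}(f)$ with $j=n-r$, using $\Phi^{n-r}(E_{n,r}(f))=f$ and the primitive Fourier coefficient formula from \cite{Bo-2,Bo-1,Mizu1,Bo-Ragh}. One small slip in your last paragraph: $a_k^{(n)}(T)^*$ is not a $T$-independent prefactor (it is the primitive Siegel--Eisenstein coefficient at $T$); the $T$-independent pieces are $\alpha(k,r)$ and $D(f,k-r)$, while the point is that the whole expression $b(f,T)$ \emph{is} the primitive Fourier coefficient of $E_{n,r}(f)$ at $T$, so your M\"obius inversion argument between primitive and ordinary coefficients applies directly.
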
 
Here $a_k^{(n)}(T)^*$ denotes a primitive Fourier coefficient 
of the Siegel-Eisenstein series
$E^{(n)}_k(Z)=\sum a^{(n)}_k(T)e^{2\pi i{\rm tr}(TZ)}$ and
$D(f,s)$ is the standard $L$-function attached to $f$ (with Euler factors
of degree $2n+1$). Furthermore the numerical constant $\alpha(k,r)$ is given by
$$\alpha(k,r):= \frac{1}{2}\zeta(k)\prod_{i=1}^r\zeta(2k-2i)$$ 

Mizumoto \cite{Mizu2} obtained 
subtle integrality results for the Fourier coefficients
of Klingen-Eisenstein series, i.e. he obtained lower bounds for $v_{\mathfrak p}(A^{n}(f))$. 
The theorem above can be viewed as 
supplement to Mizumoto's result in the sense
that it describes lower bounds for $v_{\mathfrak p}(A^n(f))$.
It may be difficult to get such a result directly 
from inspecting the series above.

\begin{Rem}
Theorem \ref{Kli} applies also to the case $r=0$, $f=1$:
Then we deal just with the ordinary Siegel-Eisenstein series. 
If $n$ is odd, our result above can in this case be deduced in a more direct 
way from properties of Bernoulli numbers as follows: We shall prove 

\textit {Let $n$ be an odd positive integer. If all of \[ 2k-r,\ 2k-(r+1),\ \cdots ,\ 2k-(n-1)\]
are not divisible by $p-1$, then $v _p^{(n)}(E_k^{(n)})\le v ^{(r)}_p(E_k^{(n)})$. }

\begin{proof}
We write $B_k:=\frac{N_k}{D_k}$. \\ 
For example, assume that $r$ is even. Under the notation as in subsection \ref{Eisen}, we have  
\begin{align*}
v _p(c_{k,r})&=v _p\left( \frac{k}{B_k} \right) +\sum _{i=1}^{\frac{r}{2}} v _p\left( \frac{k-i}{B_{2k-2i}} \right)-v _p(D_{2k-r}) \\  
&=v _p\left( \frac{k}{B_k} \right) +\sum _{i=1}^{\frac{r}{2}} v _p\left( \frac{k-i}{B_{2k-2i}} \right)
\end{align*}
because of $v _p(D_{2k-r})=0$. Hence 
\begin{align*}
v_p(c_{k,n})&=v _p\left( \frac{k}{B_k} \right) +\sum _{i=1}^{\frac{n-1}{2}} v_p\left( \frac{k-i}{B_{2k-2i}} \right) \\
&=v_p(c_{k,r})+\sum _{i=\frac{r}{2}+1}^{\frac{n-1}{2}}v _p\left( \frac{k-i}{B_{2k-2i}} \right)\\
&=v _p(c_{k,r})+\sum _{i=\frac{r}{2}+1}^{\frac{n-1}{2}}(\nu _p(k-i)-v _p(N_{2k-2i})), 
\end{align*}
where the last equality follows from $v _p(D_{2k-2i})=0$ for all $\frac{r}{2}+1\le i\le \frac{n-1}{2}$. 
Since $v _p(N_{2k-2i})\ge v _p\left( k-i \right)$, we have $v _p(c_{k,n})\le v _p(c_{k,r})$. The existence $T\in \Lambda _n^+$ such that $c_k^{(n)}(T)\not \equiv 0$ mod $p$ implies $v _p^{(n)}(E_k^{(n)})\le v ^{(r)}_p(E_k^{(n)})$. Similarly, we can prove the case where $r$ is odd. 
\end{proof}
\end{Rem}

\section*{A final remark}
Most of our results were given only for non-dyadic prime ideals. However, using a more complicated formulation, one can give versions of our results including dyadic primes. A new feature is the occurrence of the quadratic character modulo $4$ in Lemma \ref{Lem7}, which further weakens the result for
 dyadic primes. We omit details.

\begin{flushleft}
Siegfried B\"ocherer\\ 
Graduate School of Mathematical Sciences \\
The University of Tokyo\\
Komaba, Tokyo 153-8914, Japan\\
Email: boecherer@t-online.de \vspace{3mm}

Toshiyuki Kikuta\\ 
College of Science and Engineering \\
Ritsumeikan University\\
1-1-1 Noji-higashi, Kusatsu, Shiga, 525-8577, Japan\\
Email: kikuta84@gmail.com
\\
\end{flushleft}

\end{document}